\newcommand{\thp}{{\mathbf T}}
\newcommand{\cnd}{{\mathbf Q}}
\newcommand{\pp}{{\mathsf P}}
\newcommand{\spm}{{\mathcal S}}
\newcommand{\thm}{{\mathcal T}}
\newcommand{\lgcp}{\text{lGCP}}
\newcommand{\vx}{{\mathbf x}}
\newcommand{\vy}{{\mathbf y}}
\begin{document}

\author{Mathias Rafler}
\title{General thinning characterizations of distributions and point processes}


\maketitle


\begin{abstract}
For general thinning procedures, its inverse operation, the condensing, is studied and a link to integration-by-parts formulas is established. This extends the recent results on that link for independent thinnings of point processes to general thinnings of finite point processes. In particular, the classical integration-by-parts formulas appear as the example of independent thinnings. Moreover, the representation of the splitting kernel of finite point processes in terms of its reduced Palm kernels is extended to general thinnings.

This link is studied in the context of discrete random variables and yields analogue characterizations of their distributions. Results on independent thinnings are complemented by a discrete stick breaking characterization of distributions.

\emph{Keywords: point process, general thinning, condensing, splitting, integration-by-parts}
\end{abstract}



\section{Introduction}

Given a random variable $N$ on the set $\N_0$ of non-negative integers representing the number of certain objects, let $N_\ast$ be the number of objects of a subsample of the $N$ objects and $N^\ast$ its remainder such that $N_\ast+N^\ast=N$. Suppose that only $N_\ast$ is observed, then the natural question concerns inference from $N_\ast$ to $N$ or, equivalently, to $N^\ast$. The former is referred to as condensing, the latter as splitting. Typically, $N_\ast$ and the remainder $N^\ast$ are not independent random variables.

Usually simple exercises are the computation of e.g. the joint distribution of $N_\ast$ and its remainder $N^\ast$, such that the splitting equation is
\begin{align}	\label{eq:intro:split}
	\Ex g(N_\ast,N^\ast) = \Ex g(N_\ast,N-N_\ast).
\end{align}
The expectation on the right hand side requires in fact the knowledge of the distribution of $N$ and the sampling rule; whereas the expectation on the left hand side requires the law of $N_\ast$ and the conditional law of $N^\ast$ given $N_\ast$ with the law of $N$ being only implicit in the law of $N_\ast$.

Such a situation was discussed in the context of point processes with an independent sampling mechanism, see e.g~\cite{FF00,bN13a,N,NRZ15}: Suppose that $N$ is a point process realizing a possibly infinite number of points in some Polish space. Colour these points independently of each other with probability $q$ red and with probability $1-q$ blue. The point configuration $N_q$ of red points is called (independent) $q$-thinning of $N$, and the joint law together with the configuration of blue points $N_q^\ast$ is the (independent) $q$-splitting. In that context, of interest is the conditional law of $N_q^\ast$ given $N_q$, which is called $q$-splitting kernel. For the Poisson process, it is well-known that $N_q$ and $N_q^\ast$ are Poisson processes both and independent of each other. Moreover, the independence characterizes the Poisson process~\cite{kF75}. Ambartzumian touched this topic for Gibbs processes and asked under which conditions a Gibbs process may appear as a thinning of another process~\cite{rA91}. Recently, splittings were studied in the larger class of Papangelou processes (see e.g.~\cite{hZ09}), which are given by an integration-by-parts formula. In~\cite{NRZ15}, it was shown that a splitting equation of type~\eqref{eq:intro:split} with the independent splitting is equivalent to an integration-by-parts formula, which is known to be equivalent to the Dobrushin-Lanford-Ruelle equations~\cite{NZ79}. Thus, the static splitting mechanism is related to a reversibility condition for a spatial birth-and-death process.

These results, apart from the link to the Dobrushin-Lanford-Ruelle equations, simplified to random variables, yield characterizations of Poisson, binomial and negative binomial distributions with explicit representations of all involved objects. However, the independent sampling mechanism of colouring each object red and blue independent of each other, is just one choice of choosing a subsample. In Section~\ref{sect:trv} the aim is firstly to show the link between sampling and condensing and integration-by-parts, and secondly to characterize pairs of sampling mechanism and condensing mechanisms which characterize a law of a random variable $N$. Besides the independent sampling, this allows to deal with a discrete stick breaking as an example.

These discussions of the discrete setting carry over to multivariate situations, i.e. thinning of a random element on a hypercube or a random configuration of the Ising model on a finite graph. Both are discrete partially ordered sets, and thinning means to choose at random a configuration which is smaller in this partial order. This study is subject to future work. The setup in Section~\ref{sect:fpp} is more general, when finite point processes on Polish spaces are considered. Firstly, the splitting kernel of a finite point process is essentially given by its reduced Palm distributions, which generalizes a relation known for independent splitiings. Secondly, an integration-by-parts formula suited to the given thinning is derived, which contains the known formulas as a special case for independent $q$-thinnings. Finally, the question of obtaining a point process from a thinning and a splitting is addressed.

\section{Thinnings of distributions on $\N_0$}	\label{sect:trv}
\subsection{Thinned and condensed laws}	
 
\begin{definition}
A \emph{thinning matrix} $\thp$ is a stochastic matrix on $\N_0$ such that $\thp_{n,k}=0$ for all indices $k>n\geq 0$. $\thp$ is said to be positive if $\thp_{n,k}>0$ for all $k\leq n$ and is said to connect to all lower levels if for each $n\geq 0$ and $k\leq n$ there exist $n=j_0,j_1,\ldots,j_l=k$ such that $\thp_{j_0,j_1},\ldots,\thp_{j_{l-1},j_k}>0$.
\end{definition}
Given a random variable $N$ with distribution $\nu$, $\thp_{n,\scdot}$ is the conditional law of $N_\ast$ given $N=n$. $\thp$ is a lower triangular matrix. Subsequently, let $\nu'$ denote the law of $N_\ast$, which is given by $\nu'=\nu\thp$. Basic examples of thinning matrices follow.

\begin{example}
\begin{enumerate}
	\item The matrix related to the \emph{independent $q$-thinning} for some $q\in(0,1)$ is
		\begin{align*}
			\thp_{n,k} = \begin{cases}
						\binom{n}{k} q^k(1-q)^{n-k}, & k=0,1,\ldots,n;\\
						0 & k>n.
					\end{cases}
		\end{align*}
		The independent $q$-thinning is the mechanism described in the introduction for dividing random point configurations into the red and blue subconfiguration by independent coin tosses.
	\item The \emph{uniform thinning} is given by
		\begin{align*}
			\thp_{n,k} = \begin{cases}
						\frac{1}{n+1}, & k=0,1,\ldots,n;\\
						0 & k>n.
					\end{cases}
		\end{align*}
		This mechanism is simply understood as follows: If the $n$ objects were arranged in a line, then a barrier chosen uniformly among the $n-1$ gaps and the two boundaries produces two subsamples to the left and to the right of the barrier. This is a version of a \emph{discrete stick-breaking}.
	\item Further examples are taking almost nothing, or taking all or nothing
		\begin{align*}
			\thp_{n,k} = \begin{cases}
				1, & k=n=0;\\	q, & k=n\geq 1;\\	1-q, & k=n-1\geq 0; \\	0, & \text{else;}
			\end{cases}	\qquad
			\thp_{n,k} = \begin{cases}
				1, & k=n=0;\\	q, & k=n\geq 1;\\	1-q, & k=0, n\geq 1; \\	0, & \text{else.}
			\end{cases}
		\end{align*}
\end{enumerate}
Note that in case of $q=\nicefrac{1}{2}$, the first two examples are certain ``uniform'' sampling mechanisms. The last two examples are not primer examples for having too many zeros. While the first of the two matrices connects to all lower levels, the second does not.
\end{example}

The right hand side of Equation~\eqref{eq:intro:split} is given explicitly by
\begin{align*}
	\Ex g(N_\ast,N-N_\ast) = \sum_{n,k\in\N_0} \nu_n \thp_{n,k} g(k,n-k)
\end{align*}
for any non-negative function $g$ on $\N_0\times \N_0$. Its disintegration with respect to $N_\ast$ yields another matrix $\Upsilon$,
\begin{align}	\label{eq:th-sp:disint}
	\Ex g(N_\ast,N-N_\ast) = \Ex g(N_\ast,N^\ast) = \sum_{k,l\in\N_0} \nu'_k \Upsilon_{k,l} g(k,l),
\end{align}
where $\nu'\defeq\nu \thp$ is the distribution on $N_\ast$. Note that Equation~\eqref{eq:th-sp:disint} is equivalent to
\begin{align}	\label{eq:th-sp:depconv}
	\Ex g(N) = \Ex g(N_\ast+N^\ast) = \sum_{k,l\in\N_0} \nu'_k \Upsilon_{k,l} g(k+l).
\end{align}

\begin{definition}
Equation~\eqref{eq:th-sp:disint} is called \emph{splitting equation}, and the stochastic matrix $\Upsilon$ therein \emph{splitting matrix}. Equation~\eqref{eq:th-sp:depconv} is called \emph{dependent convolution equation}. Moreover, call $\cnd_{k,n}\defeq \Upsilon_{k,n-k}$, $k\leq n$, and $\cnd_{k,n}\defeq 0$, $k>n$, condensation matrix.
\end{definition}

Thus, the matrix $\cnd$ is the conditional law of $N$ given $N_\ast$, whereas $\Upsilon$ is the conditional law of $N^\ast$ given $N_\ast$. In particular, Equation~\eqref{eq:th-sp:depconv} reads in terms of $\cnd$ as
\begin{align*}
	\Ex g(N) = \sum_{k,n\in\N_0} \nu'_k \cnd_{k,n} g(n).
\end{align*}
$\cnd$ is easily calculated by means of Bayes' rule. In terms of Bayesian statistics, $\nu$ is the prior distribution of $N$ and, given the partial observation $N_\ast=k$, $\cnd_{k,\scdot}$ is the posterior distribution of $N$.

\begin{remark}
Let $\thp$ be the independent $q$-thinning matrix for some $q\in(0,1)$. The results of~\cite{NRZ15} show that if $\nu$ is a Poisson, binomial or negative binomial distribution, then also the rows of $\Upsilon$ and therefore also those of $\cnd$ are of the same distribution apart from a shift in case of $\cnd$. Moreover, if $\thp$ and $\cnd$ are fixed that way, they determine a unique distribution and the related splitting equations are equivalent to integration-by-parts formulas for the distributions. In Subsection~\ref{sect:gp:ex} these relations are summarized in more detail.
\end{remark}


\begin{proposition}
Let $\thp$ be a thinning matrix and $\nu$ be any distribution on $\N_0$. Then the condensation matrix $\cnd$ is related to $\thp$ and $\nu$ via the balance equations
\begin{align}	\label{eq:gp:balance}
	\nu'_k \cnd_{k,n} = \nu_n \thp_{n,k}
\end{align}
for all $k,n\in\N_0$, where $\nu'=\nu \thp$. Particularly, $\cnd_{k,n}=0$ whenever $k>n$.
\end{proposition}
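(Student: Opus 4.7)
The plan is to derive the balance equations directly from the splitting equation~\eqref{eq:th-sp:disint}, which already encodes the full joint law of $(N_\ast, N)$ in two different ways.

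First I would rewrite the splitting equation~\eqref{eq:th-sp:disint} as an identity between two representations of $\Ex g(N_\ast, N)$ by noting that $g(k, n-k)$ on the left hand side corresponds to the pair $(N_\ast, N^\ast) = (k, n-k)$, equivalently $(N_\ast, N) = (k, n)$. Concretely, for fixed $k_0, n_0 \in \N_0$ with $k_0 \leq n_0$, I would test both sides of~\eqref{eq:th-sp:disint} against the indicator
\begin{align*}
    g(k, l) = \mathbf{1}\{k = k_0\}\,\mathbf{1}\{l = n_0 - k_0\}.
\end{align*}
The right-hand side of~\eqref{eq:th-sp:disint} collapses to $\nu'_{k_0} \Upsilon_{k_0, n_0 - k_0} = \nu'_{k_0}\cnd_{k_0, n_0}$, while the left-hand side, expanded as $\sum_{n,k} \nu_n \thp_{n,k} g(k, n-k)$, collapses to $\nu_{n_0} \thp_{n_0, k_0}$. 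Equating these yields~\eqref{eq:gp:balance} for $k_0 \leq n_0$.

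For the case $k > n$, the claim $\cnd_{k,n} = 0$ is simply the defining convention, and on the right-hand side $\thp_{n,k} = 0$ because $\thp$ is lower triangular, so~\eqref{eq:gp:balance} holds trivially. I would also briefly address the degenerate case $\nu'_k = 0$: the identity $\nu'_k = \sum_m \nu_m \thp_{m,k}$ forces $\nu_n \thp_{n,k} = 0$ for every $n$, so~\eqref{eq:gp:balance} reads $0 = 0$ and holds for any choice of $\cnd_{k, \scdot}$ (which in this case is not uniquely determined by the conditioning).

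The proof really reduces to an application of Bayes' rule, since $\cnd_{k,n}$ is by construction the conditional law of $N$ given $N_\ast = k$; the main (and essentially only) point to verify is the bookkeeping that the disintegration in~\eqref{eq:th-sp:disint} indeed yields the conditional law of $N^\ast$ given $N_\ast$, which is why testing against indicators is the cleanest route. No real obstacle is expected.
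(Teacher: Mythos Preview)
Your proposal is correct and matches the paper's own proof almost verbatim: the paper also plugs the indicator $g=1_{\{(k,n-k)\}}$ into the splitting equation~\eqref{eq:th-sp:disint} and reads off $\nu_n\thp_{n,k}=\nu_k'\Upsilon_{k,n-k}=\nu_k'\cnd_{k,n}$. Your additional remarks on the case $k>n$ and on $\nu_k'=0$ are fine supplementary details that the paper leaves implicit.
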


\begin{proof}
Choosing $g=1_{\{(k,n-k)\}}$ in the splitting equation~\eqref{eq:th-sp:disint} yields
\begin{equation*}
	\nu_n \thp_{n,k} = \Ex[1_{\{(k,n-k)\}}(N_\ast,N-N_\ast)] 
		= \Ex[1_{\{(k,n-k)\}}(N_\ast,N^\ast)]
		= \nu_k' \Upsilon_{k,n-k} 
		= \nu_k' \cnd_{k,n}.\qedhere
\end{equation*}
\end{proof}

Thus, $\cnd$ balances the mass of $\nu$ transported by $\thp$. Relation~\eqref{eq:gp:balance} is similar to the balancing in the time inversion of Markov chains apart from the fact that this is not considered in an equilibrium regime here.

The condensation matrix is easily calculated via
\begin{align*}
	\cnd_{k,n} = \nu_n \cdot \thp_{n,k} \cdot \frac{1}{\nu_k'}
\end{align*}
whenever $\nu_k'$ is positive. Otherwise the corresponding row is not of interest, since no matter what the size of the full sample was, there is no chance that a subsample has size $k$. In such pathological cases choose e.g. $Q_{k,k}=1$. Particularly, the positivity of $\thp$ implies positivity of $\cnd$ up to some $n_0\in\N\cup\{\infty\}$.


In the remainder of this subsection, an integration-by-parts formula is shown to hold for $\nu$ as well as two alternating cycle conditions for $\thp$ and $\cnd$ in the spirit of Kolmogorov's cycle condition for Markov chains.

Clearly, certain connectivity conditions on the support of $\nu$ in terms of $\thp$ are needed. To simplify this part of the discussion, $\nu$ will be assumed to have a support without holes (but may be finite). Note that in this case the conditions on $\thp$ in the following theorem imply that $\cnd_{n,n}, \cnd_{n,n+1}>0$ for all $n$ smaller than the smallest index $n_0$ such that $\nu_{n_0}=0$ reduced by one.

\begin{theorem}[Integration-by-parts formula]	\label{thm:dn:ibpf}
Let $\thp$ be a thinning matrix such that for all $n\in\N$, $\thp_{n,n},\thp_{n,n-1}>0$, $N$ be a random variable with law $\nu$ such that if for some $n_0$, $\nu_{n_0}=0$ holds, then $\nu_n=0$ for all $n\geq n_0$, and $\cnd$ be the corresponding condensation matrix. Then for all non-negative functions $g$ the following integration-by-parts formula holds
\begin{align*}
	\Ex\left[ g(N) \frac{\thp_{N,N-1}}{\thp_{N-1,N-1}} \right] 
		= \Ex\left[ g(N+1) \frac{\cnd_{N,N+1}}{\cnd_{N,N}} \right].
\end{align*}
\end{theorem}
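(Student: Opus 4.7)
The plan is to expand both expectations as explicit sums against $\nu$ and reduce the identity to a single per-index relation, which follows from the balance equation~\eqref{eq:gp:balance} applied twice.

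First I would write
\begin{align*}
\Ex\!\left[g(N)\frac{\thp_{N,N-1}}{\thp_{N-1,N-1}}\right]
= \sum_{n\geq 1} \nu_n\, g(n)\, \frac{\thp_{n,n-1}}{\thp_{n-1,n-1}},
\qquad
\Ex\!\left[g(N+1)\frac{\cnd_{N,N+1}}{\cnd_{N,N}}\right]
= \sum_{n\geq 0} \nu_n\, g(n+1)\, \frac{\cnd_{n,n+1}}{\cnd_{n,n}},
\end{align*}
and then reindex the right-hand sum by $m=n+1$, so that both sides become sums $\sum_{m\geq 1}\!\bigl(\,\cdot\,\bigr)g(m)$. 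Matching coefficients of $g(m)$, it suffices to establish
\begin{align*}
\nu_m\,\thp_{m,m-1}\,\cnd_{m-1,m-1} \;=\; \nu_{m-1}\,\thp_{m-1,m-1}\,\cnd_{m-1,m}
\end{align*}
for every $m\geq 1$ lying in the support of $\nu$. This is the central computation.

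To verify it, I would invoke the balance equation~\eqref{eq:gp:balance} in two ways: with $(k,n)=(m-1,m-1)$ it gives $\nu_{m-1}\thp_{m-1,m-1}=\nu'_{m-1}\cnd_{m-1,m-1}$, and with $(k,n)=(m-1,m)$ it gives $\nu_m\thp_{m,m-1}=\nu'_{m-1}\cnd_{m-1,m}$. Substituting these two relations on the two sides of the displayed identity above, both sides collapse to the same product $\nu'_{m-1}\,\cnd_{m-1,m-1}\,\cnd_{m-1,m}$, completing the proof after reassembling the sums.

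The only subtlety, and the place I would be most careful, is to ensure the ratios appearing in the statement are well-defined on the support of $\nu$. The assumptions $\thp_{n,n},\thp_{n,n-1}>0$ make the left-hand ratio meaningful whenever $n$ lies in the support of $\nu$; the support-without-holes hypothesis on $\nu$ together with the balance equation then forces $\nu'_{n-1}>0$ and hence $\cnd_{n-1,n-1},\cnd_{n-1,n}>0$ for all such $n$ below the truncation level, which both justifies dividing by $\cnd_{n,n}$ on the right-hand side (with the convention that contributions from indices outside the support vanish, since both the factor $\nu_n$ and the corresponding $\cnd_{n,n+1}$ are zero) and makes the balance rearrangement above legitimate.
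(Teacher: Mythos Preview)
Your proof is correct and follows essentially the same approach as the paper: a double application of the balance equation~\eqref{eq:gp:balance} together with an index shift. The only cosmetic difference is that the paper writes the argument as a chain of equalities transforming one sum into the other, whereas you match coefficients of $g(m)$ and verify the resulting per-index identity; the mathematical content is identical.
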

$\pi(n)\defeq\nicefrac{\cnd_{n,n+1}}{\cnd_{n,n}}$ is called Papangelou function.

\begin{proof}
Let $g:\N_0\to\R_+$ be bounded, then by a double application of~\eqref{eq:gp:balance} and an index shift,
\begin{align*}
	\sum_{n\geq 1} g(n) \frac{\thp_{n,n-1}}{\thp_{n-1,n-1}} \nu_n 
		&= \sum_{n\geq 1} g(n) \frac{\cnd_{n-1,n}}{\thp_{n-1,n-1}} \nu_{n-1}'
		= \sum_{n\geq 1} g(n) \frac{\cnd_{n-1,n-1}}{\thp_{n-1,n-1}}\frac{\cnd_{n-1,n}}{\cnd_{n-1,n-1}} \nu_{n-1}'	\\
		&= \sum_{n\geq 1} g(n) \frac{\thp_{n-1,n-1}}{\thp_{n-1,n-1}}\frac{\cnd_{n-1,n}}{\cnd_{n-1,n-1}} \nu_{n-1}
		= \sum_{n\geq 0} g(n+1) \frac{\cnd_{n,n+1}}{\cnd_{n,n}} \nu_{n}.
\end{align*}
Note that on the very left hand side, the sum extends to $\N_0$ with the convention that $\frac{\thp_{0,-1}}{\thp_{-1,-1}}=0$.
\end{proof}

It follows directly that under these conditions,
\begin{align}  \label{eq:gp:det-bal}
	\nu_n \cdot \frac{ \thp_{n,n-1} }{ \thp_{n-1,n-1} } = \nu_{n-1} \cdot \frac{ \cnd_{n-1,n} }{ \cnd_{n-1,n-1} } 
\end{align}
for all $n\in\N_0$ such that $\nu_n>0$. The latter is a reversibility condition for a birth-and-death chain with birth rates given by the quotient on the right hand side and death rates given by the quotient on the left hand side. This reversibility implies an analogue of Kolmogorov's cycle condition. Later it is shown that an alternating cycle condition on the thinning and condensation matrix suffice for existence of a (possibly infinite) measure $\nu$ solving the balance equation~\eqref{eq:gp:balance} and therefore is an existence criterion for $\nu$ without making use of $\nu$. The appearance of such a condition is not surprising since~\eqref{eq:gp:balance} is a balancing condition itself between $\nu$ and its thinning $\nu'$; and since two different transport mechanisms are used, they have to appear alternately.

\begin{theorem}[Alternating cycle condition for $\nu$]
Let $\nu$, $\thp$ and $\cnd$ be a distribution, a thinning matrix and a condensation matrix satisfying Equation~\eqref{eq:gp:balance}. Then for all $n,k,i,j\in\N_0$ such that $j\geq n$, $i\leq j,n$ and $\nu_n>0$,
\begin{align}	\label{eq:gp:cycle-cond}
	\thp_{ni} \cnd_{ij} \thp_{jk} \cnd_{kn} = \thp_{nk} \cnd_{kj} \thp_{ji} \cnd_{in}.
\end{align}
\end{theorem}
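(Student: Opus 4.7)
The plan is to apply the balance equation~\eqref{eq:gp:balance} once to each of the four condensation factors on either side of~\eqref{eq:gp:cycle-cond}, so that both cyclic products collapse to the same symmetric expression in $\nu$, $\nu'$ and $\thp$.

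Concretely, I would multiply the claimed identity by $\nu'_i \nu'_k$ and, assuming first that this factor is positive, invoke the balance relations
\begin{align*}
	\nu'_i \cnd_{ij} = \nu_j \thp_{ji}, \qquad \nu'_k \cnd_{kn} = \nu_n \thp_{nk}
\end{align*}
on the left-hand side to rewrite it as
\begin{align*}
	\nu_j \nu_n \, \thp_{ni}\, \thp_{ji}\, \thp_{jk}\, \thp_{nk}.
\end{align*}
Applying the symmetric pair $\nu'_k \cnd_{kj} = \nu_j \thp_{jk}$ and $\nu'_i \cnd_{in} = \nu_n \thp_{ni}$ to the right-hand side yields the identical product. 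Thus~\eqref{eq:gp:cycle-cond} holds whenever $\nu'_i \nu'_k > 0$.

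It remains to cover the degenerate case $\nu'_i \nu'_k = 0$. If $\nu'_i = 0$, then~\eqref{eq:gp:balance} with index pair $(n,i)$ combined with $\nu_n > 0$ forces $\thp_{n,i} = 0$, making the LHS vanish; the same relation with $(j,i)$ implies $\nu_j \thp_{j,i} = 0$, which together with the convention $\cnd_{i,i} = 1$ and $\cnd_{i,m} = 0$ for $m \neq i$ on null rows of $\cnd$ suffices to force the RHS to vanish as well (in the subcase $n = i$ one additionally uses that $\nu'_n = \nu'_i = 0$ together with $\nu_n > 0$ makes $\thp_{n,n}$, and hence the corresponding factor on the RHS, zero). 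The case $\nu'_k = 0$ is symmetric under swapping $(i,n) \leftrightarrow (k,j)$.

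The main work is not in the central computation, which is a direct two-fold substitution, but in the degenerate case: one must verify that the null-row convention is consistent with both sides vanishing simultaneously, and keep careful track of which $\thp$- or $\cnd$-factors are forced to vanish under the constraints $j \geq n$, $i \leq j, n$ and $\nu_n > 0$.
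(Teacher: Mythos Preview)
Your non-degenerate computation is correct, and the idea of using the balance equation to collapse both sides to a common symmetric expression is exactly right. The paper, however, takes a cleaner route that sidesteps the degenerate case entirely: instead of multiplying by $\nu'_i\nu'_k$ (which may vanish), it multiplies the left-hand side by $\nu_n$ and applies~\eqref{eq:gp:balance} four times \emph{in a chain}---first $\nu_n\thp_{n,i}=\nu'_i\cnd_{i,n}$, then $\nu'_i\cnd_{i,j}=\nu_j\thp_{j,i}$, then $\nu_j\thp_{j,k}=\nu'_k\cnd_{k,j}$, then $\nu'_k\cnd_{k,n}=\nu_n\thp_{n,k}$---arriving at $\nu_n$ times the right-hand side. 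Since $\nu_n>0$ is part of the hypothesis, one divides and is done. No case split is needed because~\eqref{eq:gp:balance} holds for all index pairs regardless of positivity.

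Your degenerate analysis has a genuine gap in the subcase $n=i$ with $\nu'_i=0$: you assert that $\thp_{n,n}=0$ kills ``the corresponding factor on the RHS'', but with $n=i$ the right-hand side reads $\thp_{nk}\,\cnd_{kj}\,\thp_{jn}\,\cnd_{nn}$, and $\thp_{n,n}$ is not among these factors unless additionally $j=n$ or $k=n$. The subcase can be rescued (from $\nu'_n=0$ one obtains $\nu_j\thp_{j,n}=0$, and then one branches further on whether $\nu_j>0$ and whether $\nu'_k>0$, invoking the null-row convention for $\cnd_{k,\cdot}$ as well), but it becomes a nested case distinction. The moral is that chaining through the single quantity $\nu_n$ that is \emph{guaranteed} positive by hypothesis is both shorter and more robust than introducing auxiliary multipliers that may vanish.
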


For other choices of the indices $n$, $k$, $i$ and $j$ both products vanish anyways.

\begin{proof}
Choose $i,j,n,k\in\N_0$ such that $i\leq j,n$ and $j\geq n$, then a repeated application of~\eqref{eq:gp:balance} yields
\begin{align*}
	\nu_n \thp_{n,i} \cnd_{i,j} \thp_{j,k} \cnd_{k,n} 
		&= \nu_i' \cnd_{i,n} \cnd_{i,j} \thp_{j,k} \cnd_{k,n}
		= \nu_j \thp_{j,i} \cnd_{i,n} \thp_{j,k} \cnd_{k,n}\\
		&= \nu_k' \cnd_{k,j} \thp_{j,i} \cnd_{i,n} \cnd_{k,n}
		= \nu_n  \thp_{n,k} \cnd_{k,j} \thp_{j,i} \cnd_{i,n}.
\end{align*}
Hence, the alternating cycle condition is fulfilled for all indices such that $\nu_n>0$.
\end{proof}

Of course, this condition also holds when starting with the condensation matrix $\cnd$ instead of the thinning matrix $\thp$. Then in the proof the distribution $\nu$ just needs to be replaced by its thinning $\nu'$. Both conditions will become more delicate in a non-discrete setting.

\begin{proposition}[Alternating cycle condition for $\nu'$]
Let $\nu$, $\thp$ and $\cnd$ be a distribution, a thinning matrix and a condensation matrix satisfying Equation~\eqref{eq:gp:balance}. Then for all $n,k,i,j\in\N_0$ such that $j\geq n$, $i\leq j,n$ and $\nu_i'>0$,
\begin{align}	\label{eq:gp:cycle-cond-prime}
	\cnd_{ij} \thp_{jk} \cnd_{kn} \thp_{ni} = \cnd_{in} \thp_{nk} \cnd_{kj} \thp_{ji}.
\end{align}
\end{proposition}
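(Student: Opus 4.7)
The plan is to mirror the proof of the previous alternating cycle condition, but starting from $\nu'_i$ instead of $\nu_n$ so that the cycle begins and ends at the thinned distribution. The balance equation $\nu'_k \cnd_{k,n} = \nu_n \thp_{n,k}$ can be read in two directions: it converts a $\nu'$-weight into a $\nu$-weight by absorbing a factor $\cnd$ in exchange for a factor $\thp$, or vice versa. The strategy is to apply this conversion four times so that we traverse the cycle $i \to j \to k \to n \to i$, alternating between the thinning and condensation matrices, and return to the starting point $\nu'_i$.

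Concretely, I would multiply the left hand side of~\eqref{eq:gp:cycle-cond-prime} by $\nu'_i$ and perform the following four rewrites. First, using $\nu'_i \cnd_{i,j} = \nu_j \thp_{j,i}$ (valid since $i \leq j$), the product becomes $\nu_j \thp_{j,i}\thp_{j,k} \cnd_{k,n}\thp_{n,i}$. Second, using $\nu_j \thp_{j,k} = \nu'_k \cnd_{k,j}$ (valid since, as noted in the statement following the theorem, the case $k > j$ makes both products zero anyway), we obtain $\nu'_k \cnd_{k,j}\thp_{j,i}\cnd_{k,n}\thp_{n,i}$. Third, using $\nu'_k \cnd_{k,n} = \nu_n \thp_{n,k}$ (valid since $k \leq n$ by the same vanishing argument), this equals $\nu_n \thp_{n,k}\cnd_{k,j}\thp_{j,i}\thp_{n,i}$. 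Finally, using $\nu_n \thp_{n,i} = \nu'_i \cnd_{i,n}$ (valid since $i \leq n$), we end up at $\nu'_i \cnd_{i,n}\thp_{n,k}\cnd_{k,j}\thp_{j,i}$. Dividing through by $\nu'_i > 0$ yields~\eqref{eq:gp:cycle-cond-prime}.

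I do not expect any real obstacle here: the proof is an index-bookkeeping exercise, and the assumptions $j \geq n$, $i \leq j,n$ together with the support conventions on $\thp$ and $\cnd$ (which let us silently assume $k \leq \min(j,n)$, since otherwise both sides vanish) ensure every invocation of~\eqref{eq:gp:balance} uses admissible index pairs. The only point worth stating explicitly is why the positivity hypothesis is placed on $\nu'_i$ rather than $\nu_n$: this is precisely because the cycle is now anchored at $i$ on the thinned side, so it is $\nu'_i$ that must be invertible to cancel at the end.
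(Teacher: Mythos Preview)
Your proof is correct and follows exactly the approach the paper indicates: the paper does not spell out the argument but remarks that one simply replaces $\nu$ by $\nu'$ in the proof of the preceding theorem, which is precisely the four successive applications of the balance equation~\eqref{eq:gp:balance} that you carry out starting from $\nu'_i$. Your explicit handling of the index constraints (in particular the observation that $k>\min(j,n)$ makes both sides vanish) is a welcome clarification.
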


\subsection{Thinning characterization}	

%

A distribution $\nu$ and a suitable thinning matrix $\thp$ determine a unique condensation matrix $\cnd$. On the contrary, given a thinning matrix $\thp$ and a condensation matrix $\cnd$, a distribution $\nu$ satisfying the balance equations~\eqref{eq:gp:balance} (and the assumption in Theorem~\ref{thm:dn:ibpf}) should be unique thanks to Recursion~\eqref{eq:gp:det-bal}. But existence is not guaranteed; the system of linear equations is overdetermined. Essentially, if $\thp$ and $\cnd$ satisfy the alternating cycle condition~\eqref{eq:gp:cycle-cond}, then the existence of a measure $\nu$ is ensured, but it is possibly infinite.

Note that if $\thp$ is a positive thinning matrix, then also $\cnd$ is positive, at least up to some $n_0\in\N\cup\{\infty\}$, i.e. if there exists some $n_0$ such that $\cnd_{n,n_0}=0$ for some $n<n_0$, then there is no chance to get from any $k<n_0$ to any $m>n_0$, i.e. $\cnd_{k,m}=0$. 

\begin{theorem}	\label{thm:gp:inv}
Let $\thp$ be a positive thinning matrix and $\cnd$ be a condensation matrix which satisfy the alternating cycle condition~\eqref{eq:gp:cycle-cond}. Let $n_0$ be the smallest index such that $\cnd_{n,n+1}=0$. Then there exists a measure $\nu$ on $\N_0$ with support $\{0,\ldots,n_0\}$, which satisfies
\begin{align}	\label{eq:gp:thm-inv-balance}
	\nu_n \thp_{n,k} = \nu_k' \cnd_{k,n}
\end{align}
for all $n,k\leq n_0$. Moreover, this measure is determined recursively by
\begin{align}	\label{eq:gp:thm-inv-rec}
	\nu_n = \nu_{n-1} \cdot \frac{ \thp_{n-1,n-1} }{ \thp_{n,n-1} } \cdot \frac{ \cnd_{n-1,n} }{ \cnd_{n-1,n-1} },
		\qquad n\in\N.
\end{align}
\end{theorem}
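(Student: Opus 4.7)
The plan is to define $\nu$ recursively by~\eqref{eq:gp:thm-inv-rec} starting from $\nu_0 := 1$, so that by construction the balance~\eqref{eq:gp:thm-inv-balance} holds at adjacent levels $(n,n-1)$, and then to upgrade it to the full balance for all pairs $k \leq n$ by induction on $n$, using the cycle condition as the bridge. The recursion is well-defined on $\{0,\ldots,n_0\}$ because positivity of $\thp$ gives $\thp_{n,n-1} > 0$, while the paragraph preceding the theorem ensures $\cnd_{n-1,n-1}, \cnd_{n-1,n} > 0$ for $n \leq n_0$; for $n > n_0$ one sets $\nu_n := 0$.

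The key step will be to verify, by induction on $n$, the symmetric identity
\begin{equation*}
\nu_n \thp_{n,k} \cnd_{k,k} = \nu_k \thp_{k,k} \cnd_{k,n}, \qquad 0 \leq k \leq n \leq n_0,
\end{equation*}
after which setting $\nu'_k := \nu_k \thp_{k,k}/\cnd_{k,k}$ converts this identity into the desired balance equation~\eqref{eq:gp:thm-inv-balance}. Summing~\eqref{eq:gp:thm-inv-balance} over $n$ and using $\sum_n \cnd_{k,n} = 1$ will then confirm $\nu'_k = \sum_n \nu_n \thp_{n,k} = (\nu \thp)_k$, so that the notation is consistent with the declared meaning of $\nu'$. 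The cases $k = n$ and $n = 0$ are trivial, and the case $k = n-1$ is a mere cross-multiplication of the recursion~\eqref{eq:gp:thm-inv-rec}.

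For the genuine step $k < n-1$, I would substitute $\nu_n = \nu_{n-1} \thp_{n-1,n-1} \cnd_{n-1,n}/(\thp_{n,n-1} \cnd_{n-1,n-1})$ from the recursion, and then eliminate $\nu_{n-1}$ via the inductive hypothesis applied to the pair $(n-1,k)$. After cancelling the common factor $\nu_k \thp_{k,k}$, the identity left to verify is
\begin{equation*}
\thp_{n-1,k} \cnd_{k,n} \thp_{n,n-1} \cnd_{n-1,n-1} = \thp_{n-1,n-1} \cnd_{n-1,n} \thp_{n,k} \cnd_{k,n-1},
\end{equation*}
and this is exactly the alternating cycle condition~\eqref{eq:gp:cycle-cond} instantiated with the four indices $(n,i,j,k)$ taken to be $(n-1, k, n, n-1)$; the admissibility conditions $j \geq n$ and $i \leq j, n$ become $n \geq n-1$ and $k \leq n-1$, both of which hold in the regime under consideration.

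The main obstacle I anticipate is merely the bookkeeping around positivity, namely ensuring every denominator in the recursion and in the manipulations above is strictly positive up to $n_0$. This reduces to propagating positivity of $\thp$ through $\cnd$ via the balance~\eqref{eq:gp:balance}, which is precisely what the paragraph preceding the theorem establishes; beyond the cycle condition~\eqref{eq:gp:cycle-cond} and the recursion~\eqref{eq:gp:thm-inv-rec} itself, no new ingredient is required.
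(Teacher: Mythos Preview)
Your argument is correct and rests on the same key identity as the paper's proof, namely $\nu_n \thp_{n,k}\cnd_{k,k}=\nu_k \thp_{k,k}\cnd_{k,n}$, which the paper labels~\eqref{eq:gp:prf:cycle}. Both proofs establish this via the cycle condition, though you induct upward on $n$ while the paper fixes $n$ and inducts downward on $k$; either direction works and the instantiation of~\eqref{eq:gp:cycle-cond} you give is correct.

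Where your route genuinely diverges is in the passage from~\eqref{eq:gp:prf:cycle} to the balance equation. The paper first upgrades~\eqref{eq:gp:prf:cycle} to the two-sided identity $\nu_j \thp_{j,k}\cnd_{k,n}=\nu_n \thp_{n,k}\cnd_{k,j}$ (for general $j$) via a second application of the cycle condition, and only then sums over $j$ against the definition $\nu'=\nu\thp$. You instead divide~\eqref{eq:gp:prf:cycle} by $\cnd_{k,k}$, \emph{declare} $\nu'_k:=\nu_k\thp_{k,k}/\cnd_{k,k}$ so that the balance holds by fiat, and afterwards sum over $n$ to confirm that this $\nu'_k$ coincides with $(\nu\thp)_k$. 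This is a neat shortcut: it eliminates the paper's second step entirely and uses the stochasticity of $\cnd$ only once, at the very end. The price is that the paper's intermediate identity~\eqref{eq:gp:prf:2step} is of independent interest (it is the natural precursor to the point-process version in Lemma~\ref{thm:fpp:stepgen}), whereas your argument does not produce it.
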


\begin{proof}
Note first that any solution $\nu$ of~\eqref{eq:gp:thm-inv-balance} on $\N_0$ needs to satisfy~\eqref{eq:gp:thm-inv-rec} due to the integration-by-parts formula. Hence it suffices to show that if $\nu$ is defined recursively by~\eqref{eq:gp:thm-inv-rec}, then also~\eqref{eq:gp:thm-inv-balance} holds. Let $\nu$ be a sequence defined by~\eqref{eq:gp:thm-inv-rec} and $\nu_0=1$.

Since then $\nu'=\nu\thp$, the claim follows from
\begin{align}	\label{eq:gp:prf:2step}
	\nu_j \thp_{j,k} \cnd_{k,n} = \nu_n \thp_{n,k} \cnd_{k,j}
\end{align}
for all $j,k,n\leq n_0$ such that $k\leq n$, since then
\begin{align*}
	\nu_k' \cnd_{k,n} &= \sum_{j\geq k} \nu_j \thp_{j,k} \cnd_{k,n}
		= \nu_n \thp_{n,k} \sum_{j\geq k} \cnd_{k,j} 
		= \nu_n \thp_{n,k}.
\end{align*}

The first step is to show that $\nu$ also satisfies
\begin{align*}
	\nu_n \thp_{n,k}\cnd_{k,k}=\nu_k \thp_{k,k} \cnd_{k,n}
\end{align*}
for all $k<n\leq n_0$, or, equivalently,
\begin{align}	\label{eq:gp:prf:cycle}
	\nu_n &= \nu_{k} \frac{ \thp_{k,k} \cnd_{k,n} }{ \cnd_{k,k} \thp_{n,k} }.
\end{align}
Note first that for $k=n-1$ this is~\eqref{eq:gp:thm-inv-rec} rewritten. Assume that the claim holds for some $k<n$ and show that this holds also for $k-1$ as long as $k>0$. Indeed, applying the induction assumption, reordering, applying the induction initial step and the alternating cycle condition,
\begin{multline*}
	\nu_n \thp_{n,k-1} \cnd_{k-1,k-1} \thp_{n,k} \cnd_{k,k} \thp_{k,k-1}\\
	\begin{aligned}
		&= \nu_k \thp_{k,k}\cnd_{k,n} \thp_{n,k-1}\cnd_{k-1,k-1} \thp_{k,k-1}
		= \nu_k \thp_{k,k-1}\cnd_{k-1,k-1} \thp_{k,k}\cnd_{k,n} \thp_{n,k-1}\\
		&= \nu_{k-1} \thp_{k-1,k-1}\cnd_{k-1,k} \thp_{k,k}\cnd_{k,n} \thp_{n,k-1}
		= \nu_{k-1} \thp_{k-1,k-1} \cnd_{k-1,n} \thp_{n,k}\cnd_{k,k} \thp_{k,k-1},
	\end{aligned}
\end{multline*}
which is the first step ($\thp_{n,k} \cnd_{k,k} \thp_{k,k-1}>0$ by assumption). 

Secondly, assume $\thp_{j,n} \cnd_{n,n}>0$ for $j\geq n$ and $\thp_{j,j}\cnd_{j,n}>0$ for $j<n$, then by~\eqref{eq:gp:prf:cycle} and the alternating cycle condition, for $j\geq n$,
\begin{align*}
	\nu_j \thp_{j,k}\cnd_{k,n} \thp_{j,n}\cnd_{n,n}
		= \nu_n \thp_{n,n}\cnd_{n,j} \thp_{j,k}\cnd_{k,n}
		= \nu_n \thp_{n,k}\cnd_{k,j} \thp_{j,n}\cnd_{n,n}.
\end{align*}
Similarly, for $j<n$,
\begin{align*}
	\nu_j \thp_{j,k}\cnd_{k,n} \thp_{j,j}\cnd_{j,n}
		= \nu_n \thp_{n,j}\cnd_{j,j} \thp_{j,k}\cnd_{k,n}
		= \nu_n \thp_{n,k}\cnd_{k,j} \thp_{j,j}\cnd_{j,n},
\end{align*}
which proves~\eqref{eq:gp:prf:2step} and thus completes the proof.
\end{proof}


\begin{remark}
\begin{enumerate}
	\item Theorem~\ref{thm:gp:inv} does not ensure that the measure $\nu$ is finite. Start with~\eqref{eq:gp:balance} again and sum over $k$, then, starting from the right side,
\begin{align*}
	\nu_n = \sum_k \nu_k'\cnd_{k,n} = \sum_k \sum_j \nu_j \thp_{j,k}\cnd_{k,n} = (\nu \thp\cnd)_n.
\end{align*}
Hence $\nu$ is an invariant measure for the (full) matrix $A\defeq \thp\cnd$, and thus $\nu$ is finite if and only if the stochastic matrix $A$ is the transition matrix of a positive recurrent Markov chain on $\N_0$.
	\item Observe that $A=\thp\cnd$ is in fact a LU-decomposition of $A$; a decomposition into a product of a lower and an upper triangular matrix, and thus the two considered problems may be reformulated as:
		\begin{enumerate}
			\item For a given distribution $\nu$ and a lower triangular, stochastic matrix $\thp$ determine a stochastic matrix $\cnd$ such that the product $\thp\cnd$ has a left eigenvector $\nu$ for the eigenvalue 1 (or $\nu$ is an invariant distribution for the product).
			\item For given lower and upper triangular, stochastic matrices $\thp$ and $\cnd$ find a vector $\nu$, which is an invariant distribution for $\thp\cnd$.
		\end{enumerate}
\end{enumerate}
\end{remark}

\subsection{Examples}	\label{sect:gp:ex}

In this section, several objects that occurred previously are given explicitly for the independent as well as the uniform thinning. Particularly the independent thinning is intimately linked to the Panjer class of distributions, which consists the Poisson, binomial and negative binomial distributions~\cite{hP81,SJ81}. For the independent $q$-thinning, the quotient of the thinning matrix 
\begin{align*}
	\frac{\thp_{n,n-1}}{\thp_{n-1,n-1}} = 1-q
\end{align*}
is consant.

\begin{proposition}
Assume that $\thp$ is an independent $q$-thinning for some $q\in(0,1)$ and let $\cnd$ be a condensation matrix. Then
\begin{align*}
	(1-q) n\nu_n = \nu_{n-1} \cdot \frac{ \cnd_{n-1,n} }{ \cnd_{n-1,n-1} }.
\end{align*}
Particularly, $\nu$ satisfies an integration-by-parts formula with
\begin{align*}
	\pi(n) = \frac{ \cnd_{n,n+1} }{ (1-q) \cnd_{n,n} }, \qquad n\in\N_0.
\end{align*}
\end{proposition}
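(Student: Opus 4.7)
The plan is to reduce both claims to direct substitutions into two identities already in place: the balance recursion~\eqref{eq:gp:det-bal} and the integration-by-parts formula of Theorem~\ref{thm:dn:ibpf}. The only new ingredient I need is the explicit form of the ratio $\thp_{n,n-1}/\thp_{n-1,n-1}$ for the independent $q$-thinning.

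First I would compute this ratio directly from the binomial formula: $\thp_{n,n-1} = n\,q^{n-1}(1-q)$ and $\thp_{n-1,n-1} = q^{n-1}$, so the ratio equals $n(1-q)$. The factor $n$ appearing here is exactly what produces the $n$ on the left hand side of the displayed identity in the proposition. Plugging this value into the recursion~\eqref{eq:gp:det-bal} gives $n(1-q)\nu_n = \nu_{n-1}\,\cnd_{n-1,n}/\cnd_{n-1,n-1}$, which is the first assertion.

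For the Papangelou identity, I would substitute the same ratio into the conclusion of Theorem~\ref{thm:dn:ibpf}. The left hand side becomes $(1-q)\,\Ex[N g(N)]$, so dividing by $1-q$ rewrites the formula as $\Ex[N g(N)] = \Ex[g(N+1)\,\pi(N)]$ with $\pi(n) = \cnd_{n,n+1}/[(1-q)\,\cnd_{n,n}]$, which is exactly the stated Papangelou function.

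I do not anticipate any real obstacle: the argument is a short binomial calculation followed by substitution into identities that have already been proved. The only minor bookkeeping is to preserve the convention $\thp_{0,-1}/\thp_{-1,-1}=0$ when the sum on the left in Theorem~\ref{thm:dn:ibpf} is extended to $n=0$, but this is automatic once the factor $n$ appears on that side after substitution.
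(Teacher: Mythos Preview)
Your proposal is correct and follows exactly the route the paper intends: compute the binomial ratio $\thp_{n,n-1}/\thp_{n-1,n-1}$ and substitute into~\eqref{eq:gp:det-bal} and Theorem~\ref{thm:dn:ibpf}. Note that the paper does not give a separate proof for this proposition; the sentence preceding it displays the ratio as $1-q$, which is a typo---your value $n(1-q)$ is the correct one and is precisely what is needed for the factor $n$ in the proposition's first identity.
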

The following recursions are well-known.
\begin{example}	\label{ex:gp_panjer}
\begin{enumerate}
	\item For the Poisson distribution with parameter $\lambda$, $\cnd_{n-1,n-1} = \e^{-(1-q)\lambda}$ and $\cnd_{n-1,n} = (1-q)\lambda \e^{-(1-q)\lambda}$, and therefore
		\begin{align*}
			n\nu_n = \nu_{n-1} \cdot \lambda
		\end{align*}
	\item For the binomial distribution with parameters $r$ and $p$, $\cnd_{n-1,n-1} = \left(\frac{1-p}{1-pq}\right)^{r-(n-1)}$ and $\cnd_{n-1,n} = \binom{r-(n-1)}{1} \left(\frac{p(1-q)}{1-pq}\right)^1 \left(\frac{1-p}{1-pq}\right)^{r-(n-1)-1}$, and therefore
		\begin{align*}
			n\nu_n &= \nu_{n-1} \cdot \frac{ (r-n+1) \left(\frac{p(1-q)}{1-pq}\right)^1 \left(\frac{1-p}{1-pq}\right)^{r-(n-1)-1} }{ (1-q) \left(\frac{1-p}{1-pq}\right)^{r-(n-1)} }
				= \nu_{n-1} \cdot (r-n+1) \frac{p}{1-p}
		\end{align*}
		as expected.
	\item For the negative binomial distribution with parameters $r$ and $p$, the matrix $\cnd$ has $\cnd_{n-1,n-1} = \bigl(1-(1-q)p\bigr)^{r+(n-1)}$ and $\cnd_{n-1,n} = \binom{r+(n-1)}{1} p(1-q) \bigl(1-(1-q)p\bigr)^{r+(n-1)}$, and hence
		\begin{align*}
			n\nu_n = \nu_{n-1} \cdot \frac{ (r+n-1) p(1-q) }{ 1-q }
				= \nu_{n-1} \cdot (r+n-1) p.
		\end{align*}
\end{enumerate}
\end{example}

Note that in each of these cases, it is quite convenient to deal with the splitting matrix $\Upsilon$ instead of $\cnd$. Since $\Upsilon_{k,\scdot}$ is of the same type, it also satisfies an integration-by-parts formula.

For the discrete stick breaking, the recursion still takes a simple form. Polynomial laws stay polynomial under the uniform thinning.

\begin{proposition}
Assume that $\thp$ is a uniform thinning, then 
\begin{align*}
	\frac{ n }{ n+1 } \cdot \nu_n = \nu_{n-1} \cdot \frac{ \cnd_{n-1,n} }{ \cnd_{n-1,n-1} }.
\end{align*}
Particularly,
\begin{align*}
	\nu_n = (n+1) \nu_0 \cdot \prod_{j=0}^{n-1} \frac{ \cnd_{j,j+1} }{ \cnd_{j,j} }
\end{align*}
\end{proposition}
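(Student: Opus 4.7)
The plan is a direct application of the detailed-balance identity~\eqref{eq:gp:det-bal} (equivalently, the integration-by-parts formula of Theorem~\ref{thm:dn:ibpf}) specialized to the uniform thinning. For the uniform thinning, $\thp_{n,k}=\frac{1}{n+1}$ whenever $k\leq n$, so in particular $\thp_{n,n-1},\thp_{n-1,n-1}>0$ for every $n\in\N$, meaning the hypothesis of Theorem~\ref{thm:dn:ibpf} is satisfied. The first step is to compute
\begin{align*}
    \frac{\thp_{n,n-1}}{\thp_{n-1,n-1}} = \frac{1/(n+1)}{1/n} = \frac{n}{n+1},
\end{align*}
and to substitute this into~\eqref{eq:gp:det-bal}, which immediately yields the claimed recursion
\begin{align*}
    \frac{n}{n+1}\,\nu_n = \nu_{n-1}\cdot\frac{\cnd_{n-1,n}}{\cnd_{n-1,n-1}}.
\end{align*}

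The second step is to solve this recursion. Rewriting it as
\begin{align*}
    \nu_n = \frac{n+1}{n}\,\nu_{n-1}\cdot\frac{\cnd_{n-1,n}}{\cnd_{n-1,n-1}}
\end{align*}
and iterating gives
\begin{align*}
    \nu_n = \nu_0\cdot\prod_{j=1}^{n}\frac{j+1}{j}\cdot\prod_{j=1}^{n}\frac{\cnd_{j-1,j}}{\cnd_{j-1,j-1}}.
\end{align*}
The first product telescopes to $n+1$, and reindexing the second product as $j\mapsto j-1$ yields exactly
\begin{align*}
    \nu_n = (n+1)\,\nu_0\cdot\prod_{j=0}^{n-1}\frac{\cnd_{j,j+1}}{\cnd_{j,j}}.
\end{align*}

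There is no real obstacle: the statement is a direct specialization of Theorem~\ref{thm:dn:ibpf} once the ratio $\thp_{n,n-1}/\thp_{n-1,n-1}$ has been computed explicitly, combined with an elementary telescoping. The only point to flag is the verification that the hypotheses of Theorem~\ref{thm:dn:ibpf} on $\thp$ hold (which they do trivially, uniformly in $n$), and the implicit convention that $\nu$ has a support without holes, which is preserved by the recursion as long as the factors $\cnd_{j,j+1}/\cnd_{j,j}$ are positive.
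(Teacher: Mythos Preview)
Your proof is correct and is precisely the argument the paper has in mind: the proposition is stated in the paper without proof, as an immediate specialization of the detailed-balance identity~\eqref{eq:gp:det-bal} with the ratio $\thp_{n,n-1}/\thp_{n-1,n-1}$ computed for the uniform thinning, followed by the obvious telescoping. There is nothing to add.
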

Immediately, convergence of the series in the next line is equivalent to $\nu$ being a finite measure,
\begin{align*}
	1 = \nu_0 \left( 1 + \sum_{k=0}^\infty \prod_{j=0}^{k} (k+2)\frac{ \cnd_{j,j+1} }{ \cnd_{j,j} } \right).
\end{align*}

\begin{example}	\label{ex:gp_power}
Denote the Hurwitz zeta function by
\begin{align*}
	\zeta_{\alpha,q} \defeq \sum_{j\geq 0} \frac{1}{(j+q)^\alpha}.
\end{align*}
Let $\nu_n=\zeta_{\alpha,1}^{-1}(n+1)^{-\alpha}$, $n\in\N_0$, for some $\alpha>1$. Then 
		\begin{align*}
			\nu_k'= \frac{ 1 }{ \zeta_{\alpha,1} } \sum_{j\geq k} (j+1)^{-(\alpha+1)}
				= \frac{ \zeta_{\alpha+1,k+1} }{ \zeta_{\alpha,1} }
		\end{align*}
		and consequently, the condensation matrix is given by
		\begin{align*}
			\cnd_{k,n} = \frac{1}{\zeta_{\alpha,1}(n+1)^\alpha} \cdot \frac{1}{n+1} \cdot \frac{ \zeta_{\alpha,1} }{ \sum_{j\geq k} (j+1)^{-(\alpha+1)} } 
				= \frac{ 1 }{ (n+1)^{\alpha+1} } \cdot \frac{ 1 }{ \zeta_{\alpha+1,k+1} } 
		\end{align*}
		for $n\geq k$, such that
		\begin{align*}
			\frac{ \cnd_{n-1,n} }{ \cnd_{n-1,n-1} } = \frac{ \zeta_{\alpha+1,n} n^{\alpha+1} }{ \zeta_{\alpha+1,n} (n+1)^{\alpha+1} }
				= \left( \frac{ n }{ n+1 } \right)^{\alpha+1}.
		\end{align*}				
		Equivalently, the splitting matrix is given for all $k,l\geq 0$ by
		\begin{align*}
			\Upsilon_{k,l} = \frac{ 1 }{ \zeta_{\alpha+1,k+1} (k+l+1)^{\alpha+1} },
		\end{align*}
		which in terms of stick breaking is the probability observing a remainder of length $l$ given the first part has length $k$.
\end{example}

\begin{example}
\begin{enumerate}
	\item Let $\thp$ be the all-or-nothing thinning matrix for success probability $q\in(0,1)$ and let $\nu$ be any distribution on $\N_0$. Then the thinned distribution $\nu'$ is given by
		\begin{align*}
			\nu_0' = (1-q)+q\nu_0,\qquad \nu_n' = q\nu_n,\quad n\geq 1.
		\end{align*}
		Hence, if something larger then zero is observed, that is the perfect guess; otherwise one should toss a coin with success probability $q$ and predict 0 in case of success, and any number according to $\nu$ (including 0) otherwise, i.e. according to~\eqref{eq:gp:balance},
		\begin{align*}
			\cnd_{k,n} = \begin{cases}
				\nu_n\cdot q\cdot \frac{1}{q\nu_n}=1, & n=k;\\
				\frac{\nu_0}{1-q+q\nu_0}, & n=k=0;\\
				\frac{(1-q)\nu_n}{1-q+q\nu_0}, & k=0,\, n>0;\\
				0, & \text{otherwise.}
			\end{cases}
		\end{align*}
	\item Let $\thp$ be the taking-almost-nothing thinning matrix for some $q\in(0,1)$ and let $\nu$ be any distribution on $\N_0$. $\nu'$ is given by
		\begin{align*}
			\nu_0' = \nu_0 + (1-q)\nu_1, \qquad \nu_n'= q\nu_n + (1-q)\nu_{n+1}, \quad n\geq 1.
		\end{align*}
		Therefore,
		\begin{align*}
			\cnd_{k,n} = \begin{cases}
				\nu_n\cdot q\cdot \frac{1}{q\nu_n+(1-q)\nu_{n+1}} = \frac{q\nu_n}{q\nu_n+(1-q)\nu_{n+1}}, & n=k\geq 1;\\
				\frac{(1-q)\nu_{n}}{q\nu_{n-1}+(1-q)\nu_{n}}, & n=k+1\geq 2;\\
				\frac{\nu_0}{\nu_0+(1-q)\nu_1}, & n=k=0;\\
				\frac{(1-q)\nu_1}{\nu_0+(1-q)\nu_1}, & k=0,\, n=1;\\
				0, & \text{otherwise.}
			\end{cases}
		\end{align*}
		Note that the quotient $\nicefrac{\cnd_{n-1,n}}{\cnd_{n-1,n-1}}=\nicefrac{(1-q)\nu_n}{q\nu_{n-1}}$ essentialy contains all information about $\nu$.
\end{enumerate}
\end{example}

\section{Finite point processes and general splittings}	\label{sect:fpp}
\subsection{Finite point processes and thinnings}

The basic results carry over to the case of point processes with more technicalities. Along that way, the splitting kernel is identified as being absolutely continuous with respect to the Palm kernel, which sheds new light on a result in~\cite{aK86}. A second ingredient relates thinning kernels to the Campbell measure of point processes and thus links to integration-by-parts formulas. Again, an alternating cycle condition on a thinning and a condensing kernel together with a summability condition yields a characterization of a point process by means of these two kernels. At this point it should be remarked that if the space, on which the original distribution operates, is discrete, the proofs simplify. Nevertheless they are instructive even in the discrete situation.

Throughout this section, let $\pp$ be a finite point process on a Polish space $X$, i.e. a distribution on the set of finite point measures $\Mpf(X)$ (when equipped with the $\sigma$-algebra generated by all evaluation mappings $\zeta_B:\mu\mapsto\mu(B)$ for Borel sets $B\subseteq X$, $\Mpf(X)$ is Polish itself), see e.g.~\cite{DVJ03,oK83,kK14}. Since in general there are no ambiguities about the underlying space, it is also denoted by $\Mpf$. Any $\mu\in\Mpf$ should be understood as a collection of points of $X$ (with possible multiplicities). $\mu$ can be written as
\begin{align*}
	\mu = \sum_{i} \delta_{x_i}
\end{align*}
where the sum is finite and the $x_i$ are (not necessarily mutually distinct) locations of the points. The term $x\in\mu$ is understood as the point measure $\mu$ contains an atom at $x$, and $\mu-\delta_x$ is understood as the point measure $\mu$ with and atom at $x$ removed.

\begin{definition}
A thinning kernel on $X$ is a stochastic kernel
\begin{gather*}
	\thp: \Mpf(X) \to \Mpf(X), \qquad \mu \mapsto \thp_\mu(\d\eta)
\end{gather*}
such that $T_\mu$ is concentrated on $\Mpm_\mu \defeq \bigl\{\eta\in\Mpf(X): \eta\leq\mu \bigr\}$.
\end{definition}

For a finite point process $\pp$ on $X$, denote by $\pp'\defeq \thp\pp$ the thinning of $\pp$ with respect to $\thp$, i.e. the image of $\pp$ under $\thp$. $\pp'$ is a finite point process which realizes subconfigurations of the point configurations of $\pp$. Denote their joint law by $\thm$. The existence of a condensation kernel is ensured by an abstract disintegration theorem applied to $\thm$~\cite{oK84,bN13a}.
\begin{lemma}
Let $\pp$ be a finite point process and $\thp$ be a thinning kernel on $X$. Then there exists a stochastic kernel
\begin{gather*}
	\cnd: \Mpf(X) \to \Mpf(X), \qquad \eta \mapsto Q_\eta(\d\mu),
\end{gather*}
the \emph{condensation kernel}, which is concentrated on $\{\mu\in\Mpf(X): \mu\geq\eta\}$, such that
\begin{align}	\label{eq:fpp:cond-disint}
	\int g(\eta,\mu) \thm(\d\mu,\d\eta) = \iint g(\eta,\mu) \thp_\mu(\d\eta) \pp(\d\mu) = \iint g(\eta,\mu) \cnd_\eta(\d\mu) \pp'(\d\eta)
\end{align}
for all non-negative, measurable functions $g$.
\end{lemma}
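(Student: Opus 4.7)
The plan is to use an abstract disintegration theorem for the joint law of the pair $(\eta, \mu)$ and then clean up the support condition on a null set.

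First, I would define the joint distribution $\thm$ on the Polish product space $\Mpf(X) \times \Mpf(X)$ directly from the data: set $\thm(\d\mu, \d\eta) := \thp_\mu(\d\eta)\, \pp(\d\mu)$, which is a probability measure by Fubini since $\thp$ is a stochastic kernel. Its marginal in $\mu$ is $\pp$ and its marginal in $\eta$ is $\pp' = \thp\pp$ by definition. Because both coordinate spaces are Polish, the abstract disintegration theorem cited from Kallenberg or Nehring yields a stochastic kernel $\widetilde{\cnd}$ from $\Mpf(X)$ to $\Mpf(X)$ such that $\thm(\d\mu, \d\eta) = \widetilde{\cnd}_\eta(\d\mu)\, \pp'(\d\eta)$. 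Integrating any non-negative measurable $g$ then produces the chain of equalities in~\eqref{eq:fpp:cond-disint}.

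Second, I would establish the support property $\widetilde{\cnd}_\eta(\{\mu : \mu \geq \eta\}) = 1$. Since $\thp_\mu$ is concentrated on $\Mpm_\mu$ for every $\mu$, the joint law $\thm$ is concentrated on the set $D := \{(\eta,\mu) : \eta \leq \mu\}$. This set is Borel measurable in $\Mpf(X) \times \Mpf(X)$, because $\eta \leq \mu$ can be tested via countably many evaluation maps $\zeta_B$ on a generating $\pi$-system of $X$. Applying the disintegration identity to the indicator $1_{D^c}$ and invoking Fubini then forces $\widetilde{\cnd}_\eta(\{\mu : \mu \not\geq \eta\}) = 0$ for $\pp'$-almost every $\eta$.

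The main obstacle is the final step: the disintegration theorem only delivers the support property $\pp'$-almost surely, while the lemma demands it for every $\eta$. I would fix this by redefining the kernel on the $\pp'$-null exceptional set via $\cnd_\eta := \delta_\eta$ (trivially concentrated on $\{\mu \geq \eta\}$) and keeping $\cnd_\eta := \widetilde{\cnd}_\eta$ elsewhere. The modification does not affect either side of~\eqref{eq:fpp:cond-disint}, and one should check that $\eta \mapsto \cnd_\eta$ remains measurable as a map into the space of probability measures on $\Mpf(X)$, which follows since the exceptional set is Borel and the point mass $\eta \mapsto \delta_\eta$ is a measurable kernel.
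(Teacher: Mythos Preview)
Your proposal is correct and follows essentially the same approach as the paper: the paper does not give a separate proof environment for this lemma at all, but simply states in the preceding sentence that the existence of the condensation kernel is ensured by an abstract disintegration theorem applied to $\thm$, citing Kallenberg and Nehring. Your argument spells out the details the paper omits --- in particular the measurability of the diagonal set $D$, the almost-sure support conclusion, and the null-set modification to upgrade it to an everywhere statement --- but the underlying idea is identical.
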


Note that the splitting measure $\spm$ is related to $\thm$ via
\begin{align*}
	\int g(\eta,\mu) \spm(\d\mu,\d\eta) = \int g(\eta,\mu-\eta) \thm(\d\mu,\d\eta),
\end{align*}
and hence its disintegration with respect to $\pp'$, the splitting kernel $\Upsilon$, to $\cnd$ via
\begin{align*}
	\int \phi(\mu) \cnd_\eta(\d\mu) = \int \phi(\nu+\eta) \Upsilon_\eta(\d\nu)
\end{align*}
for all non-negative, measurable $\phi$ and $\pp'$-a.e. $\eta$.

Moreover, for each $\mu\in\Mpf$,  $\thp_\mu$ is in fact a discrete distribution describing how to sample from a (finite) population $\mu$ distributed in space. In contrast to that, the condensation kernel is typically not discrete.

For a point measure $\mu$ denote by $\mu^{-[n]}$ its $n$-th falling factorial measure, i.e.
\begin{align*}
	\int g(x_1,\ldots,x_n)\mu^{-[n]}(\d x_1,\ldots,\d x_n) = \int g(x_1,\ldots,x_n) \bigl(\mu-\delta_{x_{n-1}}-\ldots-\delta_{x_1}\bigr)(\d x_n)\cdots \mu(\d x_1),
\end{align*}
which is the sum over all ordered subsets of $\mu$ of size $n$ if $\mu$ has no multiple points. A general thinning can be written as
\begin{align*}
	T_\mu(\phi) = Z_\mu \sum_{n\geq 0} \frac{1}{n!} \int \phi(\delta_{x_1}+\ldots+\delta_{x_n}) t(\mu;x_1,\ldots,x_n) \mu^{-[n]}(\d x_1,\ldots,\d x_n)
\end{align*}
for some weight function $t$ for the subconfiguration built from $x_1,\ldots,x_n$, which is symmetric in $x_1,\ldots,x_n$. The first sumand is read as being a non-negative real. Also, write $\vx=(x_1,\ldots,x_n)$ and $\delta_{\vx}$ instead of $\delta_{x_1}+\ldots+\delta_{x_n}$ (point configurations on products of $X$ are not considered here). For later convenience, $Z_\mu$ is the (inverse of the) normalization. 

Two generalizations of the thinnings in Section~\ref{sect:trv} shall be studied here, referred to as type 1 and 2:
\begin{enumerate}
	\item similar to~\cite{bN13a}, the weight function depends on the sample, but not on the set $\mu$ from which the points are sampled, i.e. $t$ is a symmetric function of $\vx$ only;
	\item similar to mixed sample processes, $t$ depends on $\mu$ and $\vx$ only via their total mass or length, respectively, i.e. denoted briefly by $t(\mu;x_1,\ldots,x_n)=t_{|\mu|,n}$, where $|\mu|\defeq \mu(X)$ is the total mass of $\mu$. Given the number of sampled points the configuration is sampled uniformly.
\end{enumerate}
Note that the independent thinning is of both types with
\begin{align*}
	t(x_1,\ldots,x_n) = \left(\frac{q}{1-q}\right)^n,
\end{align*}
see~\cite[Lemma 6.3.2]{bN13a}. Contrarily, a ``uniform'' thinning necessarily depends on the number of points sampled from, see the examples in~\ref{sect:fpp:ex}

The following theorem generalizes the result in~\cite{aK86} that identifies the splitting kernel as being absolutely continuous with respect to the reduced Palm distributions of the given point process. The reduced Palm distributions appear as disintegrations of higher order reduced Campbell measures with respect to the factorial moment measure of $\pp$, and this is tailored for thinnings of the given type. In the case of an independent $q$-thinning, $Z_\mu=(1-q)^{|\mu|}$.

\begin{theorem}	\label{thm:fpp:split-palm}
The splitting kernel of a finite point process $\pp$ with existing moment measures of all orders related to a thinning $\thp$ of first type is given by
\begin{align*}
	\Upsilon_\nu(\phi) = \frac{1}{\int Z_{\eta+\nu} \pp^!_\nu(\d\eta)} \int \phi(\eta) Z_{\eta+\nu} \pp^!_\nu(\d\eta)
\end{align*}
for all non-negative, measurable functions $\phi$ and $\pp'$-a.e. $\nu$.
\end{theorem}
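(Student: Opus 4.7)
The strategy is to evaluate the joint integral $\int f(\nu)\phi(\eta)\,\spm(\d\eta,\d\nu)$ for non-negative test functions $f,\phi$ in two different ways and then identify $\Upsilon$ by uniqueness of the disintegration against $\pp'$. By definition of the splitting kernel the first evaluation gives $\int f(\nu)\Upsilon_\nu(\phi)\,\pp'(\d\nu)$; on the other hand, routing through the thinning kernel $\thp$ and the relation between $\spm$ and $\thm$ established in the excerpt, the same quantity equals $\int\pp(\d\mu)\int f(\eta')\phi(\mu-\eta')\,\thp_\mu(\d\eta')$.

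Into this second expression I would plug the explicit series representation of $\thp_\mu$ valid for a thinning of first type, obtaining
\[
\sum_{n\geq 0}\frac{1}{n!}\int\pp(\d\mu)\,Z_\mu\int f(\delta_{\vx})\phi(\mu-\delta_{\vx})\,t(\vx)\,\mu^{-[n]}(\d\vx).
\]
Because $t$ depends on the sample $\vx$ alone (this is precisely the defining feature of type 1), the weight does not interfere with a standard Campbell--Palm manipulation. The key input is the defining disintegration of the $n$-th order reduced Campbell measure against the factorial moment measure $\alpha^{(n)}$: for every measurable $h\geq 0$,
\[
\int\pp(\d\mu)\int h(\vx,\mu-\delta_{\vx})\,\mu^{-[n]}(\d\vx)=\int\alpha^{(n)}(\d\vx)\int h(\vx,\eta)\,\pp^!_{\delta_{\vx}}(\d\eta),
\]
which is legitimate under the standing assumption that all moment measures exist. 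Applying this identity with $h(\vx,\eta)=f(\delta_{\vx})\phi(\eta)\,t(\vx)\,Z_{\eta+\delta_{\vx}}$ converts the joint integral into
\[
\sum_{n\geq 0}\frac{1}{n!}\int\alpha^{(n)}(\d\vx)\,t(\vx)\,f(\delta_{\vx})\int\phi(\eta)\,Z_{\eta+\delta_{\vx}}\,\pp^!_{\delta_{\vx}}(\d\eta).
\]

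Specialising $\phi\equiv 1$ isolates $\pp'$: on atomic configurations $\nu=\delta_{\vx}$ the thinned distribution $\pp'$ has ``density'' $t(\vx)\int Z_{\eta+\delta_{\vx}}\,\pp^!_{\delta_{\vx}}(\d\eta)$ with respect to $\tfrac{1}{n!}\alpha^{(n)}(\d\vx)$. Dividing the expression for general $\phi$ by this normalising factor and reading off the disintegration against $\pp'$ delivers the claimed identity for $\Upsilon_\nu(\phi)$ at $\pp'$-a.e.\ $\nu$.

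The main obstacle I anticipate is bookkeeping rather than substantive: the reduced Palm distributions $\pp^!_\nu$ are indexed by unordered configurations, while $\alpha^{(n)}$ and $\mu^{-[n]}$ live on ordered tuples in $X^n$, and the symmetry of $t$ together with the $\tfrac{1}{n!}$ factor must be used to make the identification $\nu\leftrightarrow\delta_{\vx}$ consistent on both sides. A secondary check is that the denominator $\int Z_{\eta+\nu}\,\pp^!_\nu(\d\eta)$ is $\pp'$-a.s.\ strictly positive so that the right-hand side is a genuine stochastic kernel; this is automatic from the preceding display, since $\pp'$ is absolutely continuous precisely with respect to that quantity.
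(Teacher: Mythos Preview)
Your proposal is correct and follows essentially the same route as the paper: expand $\spm$ via the series for $\thp_\mu$, apply the reduced Campbell--Palm disintegration (using that $t$ depends on $\vx$ only), and read off $\Upsilon_\nu$ by comparing the general $\phi$ with $\phi\equiv 1$. The only cosmetic difference is that after reaching the sum over $n$ against the factorial moment measures, the paper re-expands $\nu^{[n]}_\pp(\d\vx)=\int\mu^{-[n]}(\d\vx)\,\pp(\d\mu)$ and collects the series back into $\thp_\mu(\d\nu)\,\pp(\d\mu)$ before identifying the disintegration, whereas you identify $\pp'$ directly through its density against $\tfrac{1}{n!}\alpha^{(n)}$; both finish in the same place.
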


\begin{proof}
The spirit of the proof agrees with~\cite[Proposition 6.3.5]{bN13a} apart from the particular thinning. Firstly, observe that the factorials of $\mu$ together with $\pp$ form reduced Campbell measures, which disintegrate into the factorial moment measures and reduced Palm measures,
\begin{align*}
	\spm(h) &= \sum_{n\geq 0} \frac{1}{n!} \iint h(\delta_\vx,\mu-\delta_\vx) \, Z_\mu \, t(\vx) \, \mu^{-[n]}(\d \vx) \pp(\d\mu)	\\
		&= \sum_{n\geq 0} \frac{1}{n!} \int h(\delta_\vx,\eta) \, Z_{\eta+\delta_\vx} \, t(\vx) \, C^{!,n}(\d\vx;\d\eta)	\\
		&= \sum_{n\geq 0} \frac{1}{n!} \iint h(\delta_\vx,\eta) \, Z_{\eta+\delta_\vx} \, t(\vx) \, \pp^!_\vx(\d\eta) \nu^{[n]}_\pp(\d\vx),
\intertext{where $\nu^{[n]}_\pp$ is the $n$-th factorial moment measure of $\pp$. Replacing this by its full integral and collecting terms for $\thp$, we continue as}
		&= \int \sum_{n\geq 0} \frac{1}{n!} \iint h(\delta_\vx,\eta) \, Z_{\eta+\delta_\vx} \, t(\vx) \pp^!_\vx(\d\eta) \mu^{-[n]}(\d\vx) \pp(\d\mu)	\\
		&= \iiint h(\nu,\eta) \, \frac{Z_{\eta+\nu}}{Z_\mu} \, \pp^!_\nu(\d\eta) \thp_\mu(\d\nu) \pp(\d\mu).
\end{align*}
If $h=h_1 \otimes h_2$, then
\begin{align*}
	\spm(h) = \iint \frac{ h_1(\nu) }{ Z_\mu }  \int h_2(\eta) Z_{\eta+\nu} \pp^!_\nu(\d\eta) \thp_\mu(\d\nu) \pp(\d\mu).
\end{align*}
The inner integral is, apart from the normalization, the splitting kernel; the missing constant is obtained by setting $h_2\equiv 1$.
\end{proof}

Thus, given a partial observation $\nu$ of a point configuration realized by $\pp$, the distribution of the remainder is essentially given by its reduced Palm distribution $\pp^!_\nu$. The distribution of the entire configuration follows immediately. 

\begin{corollary}	\label{thm:fpp:cnd-palm}
For $\pp'$-a.e. $\nu\in\Mpf(X)$,
\begin{align*}
	\cnd_\nu(\phi) = \frac{1}{\pp^!_\nu(Z_{\nu+\scdot})} \int \phi(\nu+\eta) Z_{\nu+\eta} \pp^!_\nu(\d\eta)
\end{align*}
for any non-negative, measurable function $\phi$. If in addition $\nu(X)=n$, then
\begin{align*}
	\cnd_\nu(\zeta_X=n) = \frac{Z_\nu \pp^!_\nu(\zeta_X=0)}{\pp^!_\nu(Z_{\nu+\scdot})}.
\end{align*}
\end{corollary}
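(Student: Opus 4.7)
The plan is to reduce the corollary directly to Theorem~\ref{thm:fpp:split-palm} using the change-of-variables identity relating the condensation kernel $\cnd$ to the splitting kernel $\Upsilon$, which was recorded right after Equation~\eqref{eq:fpp:cond-disint}, namely
\begin{align*}
    \int \phi(\mu) \cnd_\eta(\d\mu) = \int \phi(\eta+\nu) \Upsilon_\eta(\d\nu),
\end{align*}
valid for $\pp'$-a.e.\ $\eta$ and every non-negative measurable $\phi$. So the first step is simply to read off $\cnd_\nu(\phi)$ as $\Upsilon_\nu$ applied to the function $\psi(\eta)\defeq\phi(\nu+\eta)$; this is a genuine application since $\phi$ only sees $\mu=\nu+\eta$ and the $\Upsilon$-integration is over the remainder $\eta$.

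The second step is to plug the formula from Theorem~\ref{thm:fpp:split-palm} into this expression with $\psi$ in place of $\phi$, which immediately yields
\begin{align*}
    \cnd_\nu(\phi) = \frac{1}{\int Z_{\eta+\nu}\,\pp^!_\nu(\d\eta)} \int \phi(\nu+\eta)\, Z_{\nu+\eta}\, \pp^!_\nu(\d\eta),
\end{align*}
which is the first assertion once the normalizing integral is abbreviated as $\pp^!_\nu(Z_{\nu+\scdot})$. No further manipulation is required here; the entire content is bookkeeping of variables, and no additional integrability or measurability issue arises beyond what is already inherent in Theorem~\ref{thm:fpp:split-palm}.

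For the second claim, I would specialize to $\phi=\mathbf{1}_{\{\zeta_X=n\}}$ and use the hypothesis $\nu(X)=n$. Since $(\nu+\eta)(X)=n$ together with $\nu(X)=n$ forces $\eta(X)=0$, i.e.\ $\eta$ equals the zero point measure, the integrand collapses to a point mass: $\phi(\nu+\eta)=1$ only when $\eta=0$, and then $Z_{\nu+\eta}=Z_\nu$. Hence the numerator reduces to $Z_\nu\,\pp^!_\nu(\{\eta:\eta(X)=0\})=Z_\nu\,\pp^!_\nu(\zeta_X=0)$, and the denominator is unchanged, giving the stated ratio.

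I do not expect any real obstacle here; the corollary is a direct unfolding of the theorem plus the $\cnd$--$\Upsilon$ identity. The only point to watch is that the reduction $\eta(X)=0\Rightarrow \eta=0$ uses that $\pp^!_\nu$ is a measure on $\Mpf(X)$, so the zero measure is the unique element of $\{\eta:\eta(X)=0\}$, which is unambiguous in the finite point process setting adopted in this section.
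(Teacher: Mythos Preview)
Your proof is correct and follows essentially the same route as the paper's own argument: the first assertion is deduced from Theorem~\ref{thm:fpp:split-palm} via the $\cnd$--$\Upsilon$ identity recorded after Equation~\eqref{eq:fpp:cond-disint}, and the second is obtained by specializing to $\phi=1_{\{\zeta_X=n\}}$ and observing that $(\nu+\eta)(X)=n$ with $\nu(X)=n$ forces $\eta(X)=0$, so $Z_{\nu+\eta}=Z_\nu$ can be pulled out. The paper's proof is terser but identical in substance.
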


Thus, $\cnd_\nu$ has an atom at $\nu$ if and only if the reduced Palm distribution realizes the empty configuration with positive probability. Note that under $\cnd_\nu$, if $\nu(X)=n$, the event $\{\zeta_X=n\}$ contains $\nu$ only.

\begin{proof}
The first statement follows immediately from Theorem~\ref{thm:fpp:split-palm}, the second from the observation
\begin{equation*}
	\cnd_\nu(\zeta_X=n) = \frac{1}{\pp^!_\nu(Z_{\nu+\scdot})} \int 1_{\eta(X)=0} Z_{\nu+\eta} \pp^!_\nu(\d\eta)
		= \frac{Z_\nu}{\pp^!_\nu(Z_{\nu+\scdot})} \int 1_{\eta(X)=0} \pp^!_\nu(\d\eta).	\qedhere
\end{equation*}
\end{proof}

\begin{remark}
Observe that for a thinning of the second type, any occurrence of $Z_{\eta+\nu}$ is replaced by $Z_{\eta+\nu}t_{|\eta+\nu|, |\nu|}$, and any occurrence of $Z_\mu$ by $Z_\mu t_{|\mu|, |\nu|}$ assuming that these weights are positive.
\end{remark}

\subsection{Integration-by-parts}

The aim is to show how a finite point process and a suitable thinning kernel $\thp$ yield an integration-by-parts formula for $\pp$. In the case of random variables, the essential property is that the thinning admits to sample the entire population as well as the entire population with an individual removed with positive probability. The analogue assumption on $\thp$ is made throughout this discussion.

Suppose that $\thp$ is such that for all $\mu\in\Mpf$,
\begin{align*}
	\thp_\mu(\{\mu\})>0,	\qquad
	\int \thp_\mu(\mu-\delta_x) \mu(\d x) > 0,
\end{align*}
i.e. the thinning is allowed to remove no point or at least one single point of a configuration. In such a case and given $\pp$, at least $\cnd_\eta(\{\eta\})>0$ for $\pp$-a.e. $\eta$. In contrast, adding a particular point to a configuration does not need to have a positive probability, but $\cnd(\zeta_X=|\eta|+1)>0$.

To prepare the the main result of this part, the integration-by-parts formula, sampling $n-1$ points from a collection of $n$ points is related to sampling exactly one point. Of particular interest is which weight each single point gets during this procedure. For $n\in\N_0$ denote by $\Mpm_n(X)\subset\Mpf(X)$ the set of point measures with $n$ points (including possible multiplicities).



\begin{lemma}	\label{thm:ibpf:thn}
Let $\mu\in\Mpm_n(X)$ and $\thp$ be a thinning kernel of type 1. Then for any non-negative, measurable function $\phi$,
\begin{align*}
	\thp_\mu \bigl( \phi 1_{\zeta_X=n-1} \bigr) = \int \frac{Z_\mu}{Z_{\mu-\delta_x}} \thp_{\mu-\delta_x}\bigl( \phi 1_{\zeta_X=n-1} \bigr) \mu(\d x).
\end{align*}
\end{lemma}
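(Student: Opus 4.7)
The plan is to expand both sides using the explicit representation of $\thp$ and observe that for $\mu\in\Mpm_n$ the event $\{\zeta_X=n-1\}$ selects exactly the $n$ subconfigurations obtained by removing one atom, so everything reduces to a sum over $x\in\mu$ that matches the right-hand side.

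First I would expand the left-hand side. Since $\mu$ has total mass $n$, only the summand indexed by $n-1$ in the series defining $\thp_\mu$ contributes:
\[
	\thp_\mu(\phi 1_{\zeta_X=n-1}) = \frac{Z_\mu}{(n-1)!} \int \phi(\delta_\vx) \, t(\vx) \, \mu^{-[n-1]}(\d\vx).
\]
Then I would invoke the standard fact that for $\mu\in\Mpm_n$ and a symmetric function $F$ of $n-1$ variables the factorial measure satisfies
\[
	\int F(\vx) \, \mu^{-[n-1]}(\d\vx) = (n-1)! \int F\bigl(\text{any ordering of }\mu-\delta_x\bigr) \, \mu(\d x),
\]
since each size-$(n-1)$ subconfiguration $\mu-\delta_x$ arises from exactly $(n-1)!$ orderings of its atoms. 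Applying this with $F(\vx)=\phi(\delta_\vx)\, t(\vx)$ and using that $\delta_\vx=\mu-\delta_x$ for such orderings yields
\[
	\thp_\mu(\phi 1_{\zeta_X=n-1}) = Z_\mu \int \phi(\mu-\delta_x)\, t(\mu-\delta_x) \, \mu(\d x).
\]

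Next I would treat the integrand appearing on the right-hand side of the claim. Because $\mu-\delta_x\in\Mpm_{n-1}$, the only subconfiguration of $\mu-\delta_x$ of size $n-1$ is $\mu-\delta_x$ itself, so the same expansion collapses to a single term:
\[
	\thp_{\mu-\delta_x}(\phi 1_{\zeta_X=n-1}) = Z_{\mu-\delta_x}\, \phi(\mu-\delta_x) \, t(\mu-\delta_x).
\]
Here the type 1 hypothesis is essential: the weight $t$ depends only on the sampled points, not on the parent configuration, so it is literally the same $t(\mu-\delta_x)$ that appeared in the reduction of the left-hand side. Substituting back into the right-hand side of the claim cancels the factor $Z_{\mu-\delta_x}$ against the denominator and reproduces exactly the expression already derived for the left-hand side.

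There is no real obstacle; the proof is a direct unfolding of the definitions together with one counting identity for the factorial measure. The one conceptual point worth flagging is that the identity genuinely requires type 1: for a type 2 thinning the weight on the left would read $t_{n,n-1}$ while on the right it would read $t_{n-1,n-1}$, and this mismatch cannot be absorbed into the ratio $Z_\mu/Z_{\mu-\delta_x}$ alone, so an extra correction factor would be unavoidable.
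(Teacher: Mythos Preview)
Your proof is correct and follows essentially the same route as the paper: both start from the series representation of $\thp_\mu$, isolate the $(n-1)$-term, and use the combinatorial identity that picking an ordered $(n-1)$-tuple from $\mu\in\Mpm_n$ amounts to first removing one point $x$ and then ordering the rest. The only cosmetic difference is that the paper writes this identity as $\mu^{-[n-1]}(\d\vy)=\int(\mu-\delta_x)^{-[n-1]}(\d\vy)\,\mu(\d x)$ and then multiplies and divides by $Z_{\mu-\delta_x}$ to recognise $\thp_{\mu-\delta_x}$, whereas you evaluate both sides down to the common expression $Z_\mu\int\phi(\mu-\delta_x)\,t(\mu-\delta_x)\,\mu(\d x)$; the substance is identical.
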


\begin{proof}
Starting from representation of the thinning, note that choosing $n-1$ elements out of $n$ is the same as choosing a single element out of $n$, i.e.
\begin{align*}
	\thp_\mu \bigl( \phi 1_{\zeta_X=n-1} \bigr) 
		&= \frac{Z_\mu}{(n-1)!} \int \phi( \delta_{\vy}) t(\vy) \mu^{-[n-1]}(\d\vy)	\\
		&= \frac{Z_\mu}{(n-1)!} \iint \phi( \delta_{\vy}) t(\vy) \bigl(\mu-\delta_x\bigr)^{-[n-1]} (\d\vy) \mu(\d x)	\\
		&= \int \frac{Z_\mu}{Z_{\mu-\delta_x}} \cdot \frac{Z_{\mu-\delta_x}}{(n-1)!} \int \phi( \delta_{\vy}) t(\vy) \bigl(\mu-\delta_x\bigr)^{-[n-1]} (\d\vy) \mu(\d x)	\\
		&= \int \frac{Z_\mu}{Z_{\mu-\delta_x}} \thp_{\mu-\delta_x} \bigl( \phi 1_{\zeta_X=n-1} \bigr) \mu(\d x). \qedhere
\end{align*}
\end{proof}

In particular for $\phi\equiv 1$ the lemma states
\begin{align*}
	T_\mu \bigl(\zeta_X=n-1\bigr) 
		= \int \frac{Z_\mu}{Z_{\mu-\delta_x}} T_{\mu-\delta_x}\bigl( \zeta_X=n-1 \bigr) \mu(\d x).
\end{align*}
This way a sum over all points of a point configuration enters the game, and the weight shall be given by the quotient of the normalization constants. A similar argument for $\cnd$ cannot be expected due to its typically non-discrete nature. Nevertheless, the strategy will be to extract how $\cnd$ adds a single point to a point configuration.

\begin{theorem}[Integration-by-parts formula fo $\pp$]
Let $\pp$ be a finite point process on $X$ such that if $\pp(\Mp_{n_0})=0$ for some $n_0\in\N$, then $\pp(\Mp_n)=0$ for all $n>n_0$, and $\thp$ a thinning kernel of type 1 such that for all $n\in\N$ and $\pp$-a.e. $\mu\in\Mpm_n(X)$,
\begin{align*}
	\thp_\mu\bigl(\{\mu\}\bigr) >0
	\quad\text{ and }\quad
	\int \thp_\mu\bigl(\mu-\delta_x\bigr)\mu(\d x) >0.
\end{align*}
Then for all non-negative, measurable functions $g$,
\begin{align}	\label{eq:fpp:thm-ibpf:ibpf}
	\iint g(x,\mu) \frac{Z_\mu}{Z_{\mu-\delta_x}}\mu(\d x) \pp(\d\mu) 
		= \iint g(x,\mu+\delta_x) \pi(\mu,\d x) \pp(\d\mu),
\end{align}
where the Papangelou kernel $\pi$ is given by
\begin{align*}
	\pi(\mu,\d x) = \frac{Z_{\mu+\delta_x}}{Z_\mu \pp^!_\mu(\zeta_X=0)} \rho(\mu,\d x)
\end{align*}
and $\rho(\mu,\scdot)$ is the measure on $X$ induced by $1_{\eta(X)=1}\pp^!_\mu(\d\eta)$.
\end{theorem}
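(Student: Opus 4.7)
The plan is to apply the balance equation~\eqref{eq:fpp:cond-disint} to the test function
\begin{align*}
	F(\eta,\mu) := \frac{g(\mu-\eta,\mu)\,1_{|\mu|=|\eta|+1}}{\thp_\eta(\{\eta\})},
\end{align*}
which is well-defined because $\thp_\eta(\{\eta\})>0$ by hypothesis. The thinning side will reproduce the LHS of~\eqref{eq:fpp:thm-ibpf:ibpf}, while the condensation side---after invoking Corollary~\ref{thm:fpp:cnd-palm}---yields the RHS.

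On the thinning side, I would fix $\mu\in\Mpm_n$ and regard the section $\phi_\mu(\eta):=g(\mu-\eta,\mu)/\thp_\eta(\{\eta\})$ as a function of $\eta$ only. Applying Lemma~\ref{thm:ibpf:thn} to $\phi_\mu$, and noting that $\thp_{\mu-\delta_x}$ is supported on sub-configurations of $\mu-\delta_x\in\Mpm_{n-1}$ so that $\thp_{\mu-\delta_x}(\phi_\mu\,1_{\zeta_X=n-1})=\phi_\mu(\mu-\delta_x)\thp_{\mu-\delta_x}(\{\mu-\delta_x\})=g(x,\mu)$ after cancellation, one obtains
\begin{align*}
	\int F(\eta,\mu)\,\thp_\mu(\d\eta) = \int g(x,\mu)\,\frac{Z_\mu}{Z_{\mu-\delta_x}}\,\mu(\d x).
\end{align*}
Integration against $\pp$ then recovers the LHS of~\eqref{eq:fpp:thm-ibpf:ibpf}.

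On the condensation side, Corollary~\ref{thm:fpp:cnd-palm} and the substitution $\mu=\eta+\nu$ with $|\nu|=1$ transform $\cnd_\eta\bigl(F(\eta,\scdot)\bigr)$ into
\begin{align*}
	\frac{1}{\thp_\eta(\{\eta\})\,\pp^!_\eta(Z_{\eta+\scdot})}\int g(x,\eta+\delta_x)\,Z_{\eta+\delta_x}\,\rho(\eta,\d x).
\end{align*}
To convert the outer $\pp'(\d\eta)$-integration into $\pp(\d\eta)$-integration, I would re-apply~\eqref{eq:fpp:cond-disint} to $F(\eta,\mu):=h(\eta)1_{\mu=\eta}$, yielding the auxiliary identity $\thp_\eta(\{\eta\})\pp(\d\eta)=\cnd_\eta(\{\eta\})\pp'(\d\eta)$. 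Substituting cancels the factor $\thp_\eta(\{\eta\})$, and the second statement of Corollary~\ref{thm:fpp:cnd-palm}---namely $\cnd_\eta(\{\eta\})\pp^!_\eta(Z_{\eta+\scdot})=Z_\eta\pp^!_\eta(\zeta_X=0)$---collapses the remaining denominator to $Z_\eta\pp^!_\eta(\zeta_X=0)$, producing the Papangelou kernel $\pi(\eta,\d x)$ and thus the RHS of~\eqref{eq:fpp:thm-ibpf:ibpf}.

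The main obstacle I anticipate is justifying the divisions by $\thp_\eta(\{\eta\})$, $\cnd_\eta(\{\eta\})$, and $\pp^!_\eta(\zeta_X=0)$ on the relevant supports. Positivity of $\thp_\eta(\{\eta\})$ is hypothesized directly; positivity of $\cnd_\eta(\{\eta\})$ and of $\pp^!_\eta(\zeta_X=0)$ at the sizes needed follows from the ``no-gap'' support condition on $\pp$ combined with Corollary~\ref{thm:fpp:cnd-palm}, but this propagation must be tracked carefully---together with measurability of $\phi_\mu$ in $\mu$---so that the cancellations above are rigorously valid $\pp$- and $\pp'$-a.e.
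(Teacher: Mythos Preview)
Your proposal is correct and follows essentially the same route as the paper's proof: both rely on Lemma~\ref{thm:ibpf:thn} to produce the weighted Campbell integral on the thinning side, two applications of the disintegration~\eqref{eq:fpp:cond-disint} (your ``auxiliary identity'' $\thp_\eta(\{\eta\})\pp(\d\eta)=\cnd_\eta(\{\eta\})\pp'(\d\eta)$ is precisely the paper's second pass through~\eqref{eq:fpp:cond-disint} to return from $\pp'$ to $\pp$), and Corollary~\ref{thm:fpp:cnd-palm} to identify the Palm-kernel form of $\pi$. The only organisational difference is that you bake the factor $1/\thp_\eta(\{\eta\})$ into the test function $F$ from the start, whereas the paper inserts it mid-computation by multiplying and dividing; this is cosmetic.
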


Apart from the quotient, the left hand side in Equation~\eqref{eq:fpp:thm-ibpf:ibpf} is the Campbell measure of the point process $\pp$. In the case of the independent $q$-thinning, this quotient is constant and~\eqref{eq:fpp:thm-ibpf:ibpf} is a classical integration-by-parts formula. More comments are given in Section~\ref{sect:fpp:ex}.

\begin{proof}
Subsequently, denote by $\mu^\ast\subset X$ the support of a point measure $\mu$, and identify $\delta_x^\ast=\{x\}$ with the element $x\in X$. Let $g$ be a non-negative function, then by Lemma~\ref{thm:ibpf:thn} and (with the short notation for the points $y_1,\ldots,y_{n-1}$) $\thp_\mu(\zeta_X=n-1) = Z_\mu \int t(\vy) \bigl(\mu-\delta_x\bigr)^{-[n-1]}(\d \vy)\mu(\d x)>0$ whenever $\mu(X)=n$, and
\begin{multline*}
	\iint g(x,\mu) \frac{Z_\mu}{Z_{\mu-\delta_x}} \mu(\d x) \pp(\d\mu)\\
		\begin{aligned}
		&= \int \sum_{n\geq 1} 1_{\Mpm_n(X)}(\mu) \int g(x,\mu) \frac{\thp_{\mu-\delta_x}\bigl( \zeta_X=n-1 \bigr)}{\thp_{\mu-\delta_x}\bigl( \zeta_X=n-1 \bigr)} \mu(\d x) \pp(\d\mu) \\
		&= \int \sum_{n\geq 1} 1_{\Mpm_n(X)}(\mu) \int g\bigl((\mu-\nu)^\ast,\mu) \frac{1_{\Mpm_{n-1}(X)}(\nu)}{\thp_{\nu}\bigl( \nu \bigr)} \thp_\mu(\d\nu) \pp(\d\mu) \\
		&= \int \sum_{n\geq 1} \int 1_{\Mpm_n(X)}(\mu) g\bigl((\mu-\nu)^\ast,\mu) \frac{1_{\Mpm_{n-1}(X)}(\nu)}{\thp_{\nu}\bigl( \nu \bigr)} \cnd_\nu(\d\mu) \pp'(\d\nu)
		\end{aligned}
\end{multline*}
Consider the inner integral, for which $\nu(X)=n-1$ is fixed, with $\thp_{\nu}\bigl( \nu \bigr)$ replaced by $\cnd_{\nu}\bigl( \nu \bigr)$, then by Corollary~\ref{thm:fpp:cnd-palm},
\begin{multline*}
	\int 1_{\Mpm_n(X)}(\mu) g\bigl((\mu-\nu)^\ast,\mu) \frac{1}{\cnd_{\nu}\bigl( \nu \bigr)} \cnd_\nu(\d\mu)\\
		\begin{aligned}
			&= \frac{1}{\pp^!_\nu(Z_{\nu+\scdot})} \int 1_{\Mpm_1(X)}(\eta) g(\eta^\ast,\nu+\eta) \frac{Z_{\nu+\eta}}{\cnd_{\nu}\bigl( \nu \bigr)} \pp^!_\nu(\d\eta)\\
			&= \frac{1}{\pp^!_\nu(\zeta_X=0)} \int 1_{\Mpm_1(X)}(\eta) g(\eta^\ast,\nu+\eta) \frac{Z_{\nu+\eta}}{Z_\nu} \pp^!_\nu(\d\eta) \eqdef \phi(\nu),
		\end{aligned}
\end{multline*}
where for the last equality $\cnd_\nu(\nu)\pp_\nu^!(Z_{\nu+\scdot}) = Z_\nu \pp_\nu^!(\zeta_X=0)$ is used. Observe that the last line yields exactly the integration with respect to $\pi$. Thus, continuing the evaluation of the entire integral yields by switching the integrals once more
\begin{multline*}
	\sum_{n\geq 1} \int \frac{1_{\Mpm_{n-1}(X)}(\nu)}{\thp_{\nu}\bigl( \nu \bigr)} \phi(\nu) \cnd_\nu(\nu) \pp'(\d\nu)\\
		\begin{aligned}
			&= \sum_{n\geq 0} \iint \frac{1_{\Mpm_n(X)}(\nu)}{\thp_{\nu}\bigl( \nu \bigr)} \phi(\nu) 1_{\Mpm_n(X)}(\mu) \cnd_\nu(\d\mu) \pp'(\d\nu)\\
			&= \sum_{n\geq 0} \iint \frac{1_{\Mpm_n(X)}(\nu)}{\thp_{\nu}\bigl( \nu \bigr)} \phi(\nu) 1_{\Mpm_n(X)}(\mu) \thp_\mu(\d\nu) \pp(\d\mu)\\
			&= \sum_{n\geq 0} \iint \frac{1_{\Mpm_n(X)}(\mu)}{\thp_{\nu}\bigl( \nu \bigr)} \phi(\mu) \thp_\mu\bigl( \nu \bigr) \pp(\d\mu)
			= \int \phi(\mu) \pp(\d\mu),
		\end{aligned}
\end{multline*}
which is the desired integration-by-parts formula.
\end{proof}

\begin{remark}	\label{rem:fpp:ibpf}
If $\thp$ is a thinning of type 2, then in Lemma~\ref{thm:ibpf:thn},
\begin{align*}
	\thp_\mu \bigl( \phi 1_{\zeta_X=n-1} \bigr) = \int \frac{Z_\mu}{Z_{\mu-\delta_x}} \cdot \frac{t_{|\mu|,|\mu|-1}}{t_{|\mu|-1,|\mu|-1}} \thp_{\mu-\delta_x}\bigl( \phi 1_{\zeta_X=|\mu|-1} \bigr) \mu(\d x),
\end{align*}
and consequently, the integration-by-parts formula adapts to
\begin{align}	\label{eq:fpp:rem-ibpf:ibpf}
	\iint g(x,\mu) \frac{Z_\mu t_{|\mu|,|\mu|-1}}{Z_{\mu-\delta_x}t_{|\mu|-1,|\mu|-1}}\mu(\d x) \pp(\d\mu) 
		= \iint g(x,\mu+\delta_x) \pi(\mu,\d x) \pp(\d\mu),
\end{align}
where now the kernel is given by
\begin{align*}
	\pi(\mu,\d x) = \frac{Z_{\mu+\delta_x} t_{|\mu|+1,|\mu|}}{Z_\mu t_{|\mu|,|\mu|} \pp^!_\mu(\zeta_X=0)} \rho(\mu,\d x).
\end{align*}
\end{remark}

\subsection{General thinning characterization}

Given a suitable pair of a finite point process and a thinning kernel, a condensing kernel $\cnd$ is determined and a partial integration formula follows. The next step is to start with a pair of thinning and condensing kernel, and to give conditions when they admit a point process which satisfies the thinning equation. For random variables, essentially the alternating cycle condition is a sufficient condition to ensure the existence of a (possibly infinite) measure given a thinning and an condensing. However, in general only the second alternating cycle condition carries over directly to the point process case due to its discrete nature. The first one only if the condensing kernel is discrete itself. For convenience, denote by $\zeta$ the identity mapping on $\Mpf(X)$.

\begin{proposition}[Alternating cycle condition]	\label{thm:fpp:alt-cycle}
Let $\pp$ be a point process, $\thp$ a thinning kernel and $\pp'$ be the thinning of $\pp$. Let $g:\Mpf\times\Mpf\times\Mpf\times\Mpf\to\R_+$ be measurable, then
\begin{multline*}
	\int g(\kappa,\mu,\lambda,\nu) \thp_\nu(\{\kappa\}) \cnd_\lambda(\d\nu) \thp_\mu(\d\lambda) \cnd_\kappa(\d\mu) \pp'(\d\kappa)\\
		= \int g(\kappa,\mu,\lambda,\nu) \thp_\mu(\{\kappa\}) \cnd_\lambda(\d\mu) \thp_\nu(\d\lambda) \cnd_\kappa(\d\nu) \pp'(\d\kappa),
\end{multline*}
i.e. for $\pp'$-a.e. $\kappa$,
\begin{align*}
	\thp_\nu(\{\kappa\}) \cnd_\lambda(\d\nu) \thp_\mu(\d\lambda) \cnd_\kappa(\d\mu)
		= \thp_\mu(\{\kappa\}) \cnd_\lambda(\d\mu) \thp_\nu(\d\lambda) \cnd_\kappa(\d\nu).
\end{align*}
\end{proposition}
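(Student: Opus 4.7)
The strategy is to mimic the algebraic proof of the discrete alternating cycle condition from the previous section, replacing the scalar balance equation $\nu_n \thp_{n,k} = \nu_k' \cnd_{k,n}$ by the measure-theoretic disintegration identity in Equation~\eqref{eq:fpp:cond-disint}, $\thp_\mu(\d\eta)\pp(\d\mu) = \cnd_\eta(\d\mu)\pp'(\d\eta)$, read as an equality of joint measures on $\Mpf\times\Mpf$. Starting from the LHS regarded as a measure on $\Mpf^4$, I plan to apply this identity three times in succession---first to the pair $(\kappa,\mu)$, then to $(\lambda,\mu)$, then to $(\lambda,\nu)$---each time first integrating out the two coordinates not participating in the substitution so that Fubini legitimises the swap. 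After these three steps the LHS becomes the ``$\pp$-sided'' expression
\begin{align*}
\pp(\d\nu)\,\thp_\nu(\d\lambda)\,\cnd_\lambda(\d\mu)\,\thp_\mu(\d\kappa)\,\thp_\nu(\{\kappa\}).
\end{align*}

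The main obstacle, which has no counterpart in the discrete setting, is that a fourth disintegration cannot be applied directly: the $\kappa$-integration is against $\thp_\mu(\d\kappa)$ together with the atomic weight $\thp_\nu(\{\kappa\})$, so no matching pair of the form $\thp_\bullet(\d\kappa)\pp(\d\bullet)$ is available to close the cycle. The resolution uses that both $\thp_\mu$ and $\thp_\nu$ are purely atomic probability measures on $\Mpf(X)$, as is evident from the integral representation of $\thp_\mu$ recalled in this section (a finite sum of Dirac masses at subconfigurations of its argument). For any two purely atomic probability measures one then has, as measures in the variable $\kappa$,
\begin{align*}
\thp_\nu(\{\kappa\})\,\thp_\mu(\d\kappa) = \sum_a \thp_\nu(\{a\})\,\thp_\mu(\{a\})\,\delta_a(\d\kappa) = \thp_\mu(\{\kappa\})\,\thp_\nu(\d\kappa),
\end{align*}
where the sum runs over the common atoms; this is the key symmetry that breaks the impasse.

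Substituting this exchange transforms the $\pp$-sided expression into $\pp(\d\nu)\thp_\nu(\d\lambda)\cnd_\lambda(\d\mu)\thp_\nu(\d\kappa)\thp_\mu(\{\kappa\})$, and a single further application of the disintegration identity to the pair $(\kappa,\nu)$, namely $\thp_\nu(\d\kappa)\pp(\d\nu)=\cnd_\kappa(\d\nu)\pp'(\d\kappa)$, then yields precisely the RHS measure $\pp'(\d\kappa)\cnd_\kappa(\d\nu)\thp_\nu(\d\lambda)\cnd_\lambda(\d\mu)\thp_\mu(\{\kappa\})$. The pointwise kernel identity for $\pp'$-a.e.\ $\kappa$ follows from the integrated version in the standard way by specialising to test functions of product form $g(\kappa,\mu,\lambda,\nu)=f(\kappa)h(\mu,\lambda,\nu)$ and letting $f$ vary over a suitable generating family. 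The only routine item to verify is the admissibility of the Fubini rearrangements between consecutive substitutions, which is immediate since all kernels are stochastic and $\Mpf(X)$ is Polish.
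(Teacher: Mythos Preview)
Your proof is correct and follows essentially the same route as the paper: three successive applications of the disintegration identity to the pairs $(\kappa,\mu)$, $(\lambda,\mu)$, $(\lambda,\nu)$, then a closing step followed by a fourth disintegration on $(\kappa,\nu)$. The only cosmetic difference is in the closing step: the paper writes $\thp_\nu(\{\kappa\})=\int 1_{\rho=\kappa}\,\thp_\nu(\d\rho)$, disintegrates in $(\rho,\nu)$, and afterwards collapses $\int 1_{\kappa=\rho}\,\thp_\mu(\d\kappa)=\thp_\mu(\{\rho\})$, which is exactly your symmetry $\thp_\nu(\{\kappa\})\,\thp_\mu(\d\kappa)=\thp_\mu(\{\kappa\})\,\thp_\nu(\d\kappa)$ written with a dummy variable.
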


\begin{proof}
Let $g$ be a non-negative, measurable function. Successive application of the disintegration~\eqref{eq:fpp:cond-disint} and change of the order of integration yields
\begin{multline*}
	\int g(\kappa,\mu,\lambda,\nu) \thp_\nu(\{\kappa\}) \cnd_\lambda(\d\nu) \thp_\mu(\d\lambda)\cnd_\kappa(\d\mu) \pp'(\d\kappa)\\
		\begin{aligned}
			&= \int g(\kappa,\mu,\lambda,\nu) \thp_\nu(\{\kappa\}) \cnd_\lambda(\d\nu) \thp_\mu(\d\lambda)\thp_\mu(\d\kappa) \pp(\d\mu)\\
			&= \int g(\kappa,\mu,\lambda,\nu) \thp_\nu(\{\kappa\}) \cnd_\lambda(\d\nu) \thp_\mu(\d\kappa) \cnd_\lambda(\d\mu) \pp'(\d\lambda)\\
			&= \int g(\kappa,\mu,\lambda,\nu) \thp_\nu(\{\kappa\}) \thp_\mu(\d\kappa) \cnd_\lambda(\d\mu) \thp_\nu(\d\lambda) \pp(\d\nu)\\
			&= \int g(\kappa,\mu,\lambda,\nu) 1_{\rho=\kappa} \thp_\nu(\d\rho) \thp_\mu(\d\kappa) \cnd_\lambda(\d\mu) \thp_\nu(\d\lambda) \pp(\d\nu)\\
			&= \int g(\kappa,\mu,\lambda,\nu) 1_{\kappa=\rho} \thp_\mu(\d\kappa) \cnd_\lambda(\d\mu) \thp_\nu(\d\lambda) \cnd_\rho(\d\nu) \pp'(\d\rho),
		\end{aligned}
\end{multline*}
and renaming $\kappa$ and $\rho$ yields the first claim. The second is an immediate consequence.
\end{proof}

The strategy to obtain the unknown point process $\pp$ from $\thp$ and $\cnd$ agrees essentially with the one for random variables in that at the process is constructed on inductively on $\Mpm_n(X)$ for $n=0,1,2,\ldots$. However, instead of constructing $\pp$, its thinning will be given in order to employ the alternating cycle condition in Proposition~\ref{thm:fpp:alt-cycle}. For $n\in\N_0$, denote by $\pp_{n}'$ a measure on $\Mpf(X)$, which is concentrated on $\Mpm_n(X)$. Finally, each of the $\pp_n'$ will be considered as the restriction of some measure $\pp'$ to $\Mpm_n(X)$. Set
\begin{gather}
	\pp_0' (\Mpm_0) \defeq 1	\label{eq:fpp:rec-0}\\
	\begin{multlined}	\label{eq:fpp:rec-n}
		\iint g(\mu) 1_{\Mpm_n(X)}(\nu) \thp_\mu(\d\nu) \cnd_\mu(\{\mu\}) \pp_{n+1}'(\d\mu) \qquad \\
		\defeq \iint g(\mu) \thp_\mu(\{\mu\}) 1_{\Mpm_{n+1}(X)}(\mu) \cnd_\nu(\d\mu) \pp_{n}'(\d\nu).
	\end{multlined}
\end{gather}
Hence $\pp_{n+1}'$ is defined with the additional density $\thp_\mu\bigl(\Mpm_n(X)\bigr)\cnd_\mu(\{\mu\})$, compare with the Recursion~\eqref{eq:gp:thm-inv-rec}. If the sum of the measures $\pp_0',\pp_1',\ldots$ yields a finite measure, then the desired (thinned) point process will be its normalization.

The first step in the context of random variables was to extend for every $n$ Equation~\eqref{eq:fpp:rec-n} to any $m<n$ with the aid of the alternating cycle condition.
\begin{lemma}	\label{thm:fpp:stepstep}
Assume that $\thp$ is a positive thinning kernel, that $\cnd$ is a condensation kernel such that $\cnd_\eta(\{\eta\})>0$ for all $\eta\in\Mpf(X)$, and let $(\pp_n)_{n\in\N_0}$ be a the family of measures given in~\eqref{eq:fpp:rec-0} and~\eqref{eq:fpp:rec-n}. Then for each $m<n$ and non-negative, measurable function $g$,
\begin{multline}	\label{eq:fpp:stepstep}
	\iint g(\mu,\nu) 1_{\Mpm_m}(\nu) \thp_\mu(\d\nu) \cnd_\mu(\{\mu\}) \pp_{n}'(\d\mu)\\
		=	\iint g(\mu,\nu) \thp_\mu(\{\mu\}) 1_{\Mpm_n}(\mu) \cnd_\nu(\d\mu) \pp_m'(\d\nu).
\end{multline}
\end{lemma}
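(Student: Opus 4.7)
I plan to prove \eqref{eq:fpp:stepstep} by induction on the gap $k := n - m \geq 1$, closely mirroring the two-stage strategy used in the proof of Theorem \ref{thm:gp:inv} for the discrete setting.

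The base case $k = 1$ is essentially the defining relation \eqref{eq:fpp:rec-n}, but must be upgraded from test functions of the form $g(\mu)$ to test functions $g(\mu,\nu)$. This upgrade amounts to the equality of joint measures
\[
	\cnd_\mu(\{\mu\}) \thp_\mu(\d\nu) 1_{\Mpm_m}(\nu) \pp_{m+1}'(\d\mu) = \thp_\mu(\{\mu\}) 1_{\Mpm_{m+1}}(\mu) \cnd_\nu(\d\mu) \pp_m'(\d\nu)
\]
on $\Mpm_{m+1} \times \Mpm_m$. Their $\mu$-marginals already coincide by \eqref{eq:fpp:rec-n}. To match the conditional laws of $\nu$ given $\mu$ on both sides, I would substitute the explicit form of $\pp_{m+1}'$ extracted from \eqref{eq:fpp:rec-n} into the left-hand side, and then invoke the alternating cycle condition of Proposition \ref{thm:fpp:alt-cycle} to swap a thinning with a condensing. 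Positivity of $\thp$ together with $\cnd_\eta(\{\eta\})>0$ keeps all the reciprocals that appear during this substitution strictly positive.

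The inductive step $k \Rightarrow k+1$ would first establish an analogue of the diagonal identity \eqref{eq:gp:prf:cycle}: a relation between $\pp_n'$ and $\pp_m'$ for single-variable test functions $g(\mu)$, obtained by iterating the base case through intermediate levels $m+1, m+2, \ldots, n-1$ and using the alternating cycle condition at each transition to reorder a thinning past a condensing. Given this diagonal identity, I would recover the full joint identity \eqref{eq:fpp:stepstep} by a further application of the cycle condition, paralleling the passage from \eqref{eq:gp:prf:cycle} to \eqref{eq:gp:prf:2step} in the discrete case: one inserts a test function of both variables, uses the diagonal identity to shift $\pp_n'$ to $\pp_m'$ via condensing kernels, and applies Proposition \ref{thm:fpp:alt-cycle} to pair the thinning of $\mu$ down to $\nu$ at level $m$ with the condensing of $\nu$ up to $\mu$ at level $n$.

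The main obstacle lies in the careful bookkeeping of the multi-level integrals: each application of the alternating cycle condition requires pairing its four variables $(\kappa,\mu,\lambda,\nu)$ with the integration variables currently in play, so that all the downward thinnings and upward condensings are threaded together in the correct order and the intermediate integrations collapse. The positivity of $\thp$ and of the diagonal masses $\cnd_\eta(\{\eta\})$ assumed in the hypothesis is precisely what guarantees that the recursive definitions \eqref{eq:fpp:rec-0}--\eqref{eq:fpp:rec-n} are well-defined and that every disintegration invoked along the way carries a strictly positive normalization on the relevant support.
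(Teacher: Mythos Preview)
Your overall strategy---induction on the gap $k=n-m$ driven by the alternating cycle condition---is the same as the paper's. You are also right to flag the base case: the recursion~\eqref{eq:fpp:rec-n} is stated only for test functions $g(\mu)$, so passing to $g(\mu,\nu)$ is a genuine step, which the paper glosses over by declaring $m=n-1$ to be ``exactly the recursion''.

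Where your plan diverges is in the inductive step. You propose to first establish a single-variable ``diagonal identity'' (the analogue of~\eqref{eq:gp:prf:cycle}) and only then upgrade to the full two-variable statement, faithfully mirroring the discrete argument of Theorem~\ref{thm:gp:inv}. The paper instead inducts directly on the two-variable identity~\eqref{eq:fpp:stepstep}: assuming it for the pair $(m,n)$ and invoking the already-established base case for the pair $(m-1,m)$, a single application of the alternating cycle condition yields~\eqref{eq:fpp:stepstep} for $(m-1,n)$. Concretely, one multiplies both sides by the common factor $1_{\kappa=\nu}\,\thp_\eta(\d\kappa)\,\cnd_\eta(\{\eta\})\,1_{\Mpm_m}(\eta)\,\thp_\mu(\d\eta)$, applies the two known instances in succession, uses the cycle condition once, and then cancels that same factor at the end. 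This direct route bypasses the intermediate diagonal identity entirely and keeps the bookkeeping you worry about to a minimum; your route would also work but is longer.
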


\begin{proof}
For $m=n-1$, Equation~\eqref{eq:fpp:stepstep} is exactly the recursion, and the claim is proved by induction over $m$ for fixed $n$. Suppose that~\eqref{eq:fpp:stepstep} holds for some index $m$. Then for any non-negative function $g$, by first applying~\eqref{eq:fpp:stepstep} for $m$ and $n$ and exchanging intagrals,
\begin{multline*}
	\int g(\mu,\nu) 1_{\kappa=\nu} \thp_\eta(\d\kappa) 1_{\Mpm_{m-1}}(\nu) \thp_\mu(\d\nu)  \cnd_\eta(\{\eta\}) 1_{\Mpm_{m}}(\eta) \thp_\mu(\d\eta) \cnd_\mu(\{\mu\}) \pp_{n}'(\d\mu)\\
	\begin{aligned}
		&= \int g(\mu,\nu) 1_{\kappa=\nu} \thp_\eta(\d\kappa) 1_{\Mpm_{m-1}}(\nu) \thp_\mu(\d\nu) \cnd_\eta(\{\eta\}) \thp_\mu(\{\mu\}) 1_{\Mpm_{n}}(\mu) \cnd_\eta(\d\mu) \pp_{m}'(\d\eta)\\
		&= \int g(\mu,\nu) 1_{\nu=\kappa}  \thp_\mu(\d\nu)  \thp_\mu(\{\mu\}) 1_{\Mpm_{n}}(\mu) \cnd_\eta(\d\mu) 1_{\Mpm_{m-1}}(\kappa) \thp_\eta(\d\kappa) \cnd_\eta(\{\eta\}) \pp_{m}'(\d\eta),\\[0.5Em]
\rlap{second~\eqref{eq:fpp:stepstep} for $m$ and $m-1$,}\\[0.5Em]
		&= \int g(\mu,\nu) \thp_\mu(\{\mu\}u) 1_{\nu=\kappa}  \thp_\mu(\d\nu) 1_{\Mpm_{n}}(\mu) \cnd_\eta(\d\mu) \thp_\eta(\{\eta\}) 1_{\Mpm_{m}}(\eta) \cnd_\kappa(\d\eta) \pp_{m-1}'(\d\kappa),\\[0.5Em]
\rlap{and finally the alternating cycle condition and reordering yields}\\[0.5Em]
		&= \int g(\mu,\nu) \thp_\mu(\{\mu\}) 1_{\nu=\kappa}  \thp_\eta(\d\nu)  \cnd_\eta(\{\eta\}) 1_{\Mpm_{m}}(\eta) \thp_\mu(\d\eta) 1_{\Mpm_{n}}(\mu) \cnd_\kappa(\d\mu) \pp_{m-1}'(\d\kappa)\\
		&= \int g(\mu,\nu) 1_{\kappa=\nu}  \thp_\eta(\d\kappa)  \cnd_\eta(\{\eta\}) 1_{\Mpm_{m}}(\eta) \thp_\mu(\d\eta) \thp_\mu(\{\mu\}) 1_{\Mpm_{n}}(\mu) \cnd_\nu(\d\mu) \pp_{m-1}'(\d\nu).
	\end{aligned}
\end{multline*}
Observe that the integrals with respect to $\eta$ and $\kappa$ on the very left and on the very right agree, and this yields the claim.
\end{proof}

The following argument is roughly: Because of (to be read in reverse direction)
\begin{align*}
	\pp_n'(\d\mu) \thp_\mu(\d\nu) 1_{\Mpm_k}(\nu) &= \sum_{j\leq n} \pp_j'(\d\kappa) \cnd_\kappa(\d\mu) 1_{\Mpm_n}(\mu) \thp_\mu(\d\nu) 1_{\Mpm_k}(\nu) 	\\
		&= \sum_{j\leq n} \pp_k'(\d\nu) \cnd_\nu(\d\mu) 1_{\Mpm_n}(\mu) \thp_\mu(\d\kappa) 1_{\Mpm_j}(\kappa), 
\end{align*}
summation over $j$ removes $\thp$ on the last line, and this finally allows the disintegration. But prove first that the replacement is allowed.

\begin{lemma}	\label{thm:fpp:stepgen}
Assume that $\thp$ is a positive thinning kernel and that $\cnd$ is a condensation kernel such that $\cnd_\eta(\{\eta\})>0$ for all $\eta\in\Mpf(X)$. For all $j,m,n\in\N_0$ and non-negative, measurable functions $g$,
\begin{multline*}
	\iiint g(\kappa,\mu,\nu) 1_{\Mpm_{n}}(\mu) 1_{\Mpm_{m}}(\nu) \thp_\mu(\d\nu) \cnd_\kappa(\d\mu) \pp_j'(\d\kappa) \\
		=  \iiint g(\kappa,\mu,\nu) 1_{\Mpm_{n}}(\mu) 1_{\Mpm_{j}}(\kappa) \thp_\mu(\d\kappa) \cnd_\nu(\d\mu) \pp_m'(\d\nu).
\end{multline*}
\end{lemma}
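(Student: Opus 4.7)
The plan is to reduce the identity to two successive applications of Lemma~\ref{thm:fpp:stepstep}. First observe that both sides vanish unless $j\leq n$ and $m\leq n$: the factor $\cnd_\kappa(\d\mu) 1_{\Mpm_n}(\mu)$ requires $|\kappa|\leq|\mu|=n$, and analogous containments for the other indicator--kernel pairings yield the remaining inequalities. So fix $j,m\leq n$.

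Set $\tilde g(\kappa,\mu) \defeq \int g(\kappa,\mu,\nu) 1_{\Mpm_m}(\nu)\thp_\mu(\d\nu)$, so that the left hand side of the claimed identity reads $\int \tilde g(\kappa,\mu) 1_{\Mpm_n}(\mu) \cnd_\kappa(\d\mu)\pp_j'(\d\kappa)$. Apply Lemma~\ref{thm:fpp:stepstep} from its right hand side back to its left hand side, with indices $j$ and $n$ and test function $h(\mu,\kappa)\defeq \tilde g(\kappa,\mu)/\thp_\mu(\{\mu\})$; the denominator is positive by assumption. Unfolding $\tilde g$, using Fubini to gather the $\kappa$-integral and introducing $\hat g(\mu,\nu) \defeq \int g(\kappa,\mu,\nu) 1_{\Mpm_j}(\kappa) \thp_\mu(\d\kappa)/\thp_\mu(\{\mu\})$, this rewrites the left hand side as $\int \hat g(\mu,\nu) 1_{\Mpm_m}(\nu) \thp_\mu(\d\nu) \cnd_\mu(\{\mu\}) \pp_n'(\d\mu)$. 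A second, now forward, application of Lemma~\ref{thm:fpp:stepstep} with indices $m$ and $n$ and test function $\hat g$ transforms this into $\int \hat g(\mu,\nu) \thp_\mu(\{\mu\}) 1_{\Mpm_n}(\mu) \cnd_\nu(\d\mu) \pp_m'(\d\nu)$; the factor $\thp_\mu(\{\mu\})$ in the numerator cancels the $1/\thp_\mu(\{\mu\})$ inside $\hat g$, and unfolding reproduces the right hand side of the claim.

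In the boundary cases $j=n$ or $m=n$, either $\cnd_\kappa(\d\mu) 1_{\Mpm_n}(\mu)$ or $\thp_\mu(\d\nu) 1_{\Mpm_n}(\nu)$ collapses onto the diagonal and one of the two applications of Lemma~\ref{thm:fpp:stepstep} is trivial, so a single application suffices. The substantive content has already been absorbed into Lemma~\ref{thm:fpp:stepstep} itself, which is where the alternating cycle condition of Proposition~\ref{thm:fpp:alt-cycle} is used; the remaining task here is accounting: one must follow the weights $\thp_\mu(\{\mu\})$ and $\cnd_\mu(\{\mu\})$ across the two transformations and verify that $\tilde g$ and $\hat g$ are measurable and $\pp_n'$-almost everywhere finite, both of which follow immediately from positivity of $\thp$ and from $\cnd_\mu(\{\mu\})>0$.
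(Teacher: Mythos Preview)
Your argument is correct and somewhat cleaner than the route taken in the paper. You pivot twice through $\pp_n'$: first applying Lemma~\ref{thm:fpp:stepstep} backward between the indices $j$ and $n$, then forward between $m$ and $n$, with the weight $\thp_\mu(\{\mu\})$ introduced and cancelled along the way. The paper instead applies Lemma~\ref{thm:fpp:stepstep} only once, directly between the indices $m$ and $j$ (which forces a case distinction on whether $m\le j$ or $m>j$), and then invokes the alternating cycle condition of Proposition~\ref{thm:fpp:alt-cycle} explicitly to rearrange the remaining kernels $\thp_\mu(\d\nu)\,\cnd_\kappa(\d\mu)\,\thp_\kappa(\{\kappa\})\,\cnd_\eta(\d\kappa)$ into the desired form; an auxiliary factor $\thp_\kappa(\{\nu\})\cnd_\kappa(\{\kappa\})$ is carried along and cancelled at the end. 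Your approach absorbs the cycle condition entirely into the two invocations of Lemma~\ref{thm:fpp:stepstep} and thereby avoids both the case split and the explicit four-kernel identity; the paper's approach keeps the role of the cycle condition visible at this stage. Both are sound.
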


\begin{proof}
Let $\tilde g$ be any non-negative, measurable function and assume $m\leq j$ first. Then by plugging in 
\begin{align*}
	\tilde g(\kappa,\mu,\nu) = g(\kappa,\mu,\nu) \thp_\kappa(\{\nu\}) \cnd_\kappa(\{\kappa\}),
\end{align*} 
changing the order of integration and by Lemma~\ref{thm:fpp:stepstep},
\begin{align*}
	\int g(\kappa,&\mu,\nu) 1_{\eta=\nu} \thp_\kappa(\d\eta) \cnd_\kappa(\{\kappa\}) 1_{\Mpm_{m}}(\nu) \thp_\mu(\d\nu) 1_{\Mpm_{n}}(\mu) \cnd_\kappa(\d\mu) \pp_j'(\d\kappa) \\
		&= \int g(\kappa,\mu,\nu) 1_{\nu=\eta} \thp_\mu(\d\nu) 1_{\Mpm_{n}}(\mu) \cnd_\kappa(\d\mu) \thp_\kappa(\{\kappa\}) 1_{\Mpm_{j}}(\kappa) \cnd_\eta(\d\kappa) \pp_m'(\d\eta),		\\
\intertext{which thanks to the alternating cycle condition turns into}
		&= \int g(\kappa,\mu,\nu) 1_{\nu=\eta} \thp_\kappa(\d\nu) \cnd_\kappa(\{\kappa\}) 1_{\Mpm_{j}}(\kappa) \thp_\mu(\d\kappa) 1_{\Mpm_{n}}(\mu) \cnd_\eta(\d\mu) \pp_m'(\d\eta)\\
		&= \int g(\kappa,\mu,\nu) 1_{\eta=\nu} \thp_\kappa(\d\eta) \cnd_\kappa(\{\kappa\}) 1_{\Mpm_{j}}(\kappa) \thp_\mu(\d\kappa) 1_{\Mpm_{n}}(\mu) \cnd_\nu(\d\mu) \pp_m'(\d\nu).
\end{align*}
Observe that using this computation in reverse order proves the claim for $m>j$ (with the roles of $m$ and $j$ interchanged).
\end{proof}

\begin{theorem}
Let $\thp$ be a positive thinning kernel and $\cnd$ be a condensation kernel which satisfy the alternating cycle condition and such that if $n_0$ is the smallest index such that $\cnd_\eta(\Mp_{n_0})=0$ for some $\eta\in\Mp_n$ for some $n<n_0$, then also $\cnd_\eta(\Mp_m)=0$ for all $\eta\in\Mp_k$, $k<n_0$ and $m\geq n_0$. Then there exist $\sigma$-finite measures $\pp'$ and $\pp$ on $\Mpm(X)$ such that
\begin{align*}
	\iint g(\mu,\nu) \thp_\mu(\d\nu) \pp(\d\mu) = \iint g(\mu,\nu) \cnd_\nu(\d\mu) \pp'(\d\nu)
\end{align*}
for all non-negative, measurable functions $g$.
\end{theorem}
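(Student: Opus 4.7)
My plan is to take $\pp':=\sum_{n\geq 0}\pp_n'$ with the $\pp_n'$ supplied by the recursion~\eqref{eq:fpp:rec-0}--\eqref{eq:fpp:rec-n}, and to define $\pp$ by condensing $\pp'$ once, namely
\[
\pp(\d\mu) \defeq \int \cnd_\nu(\d\mu)\,\pp'(\d\nu).
\]
The hypothesis on $n_0$ guarantees that once $\cnd$ fails to reach some level, no further mass can be transported upward, so the recursion is well-defined and the sum effectively terminates at $n_0$ when $n_0<\infty$.

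To obtain $\sigma$-finiteness, setting $g\equiv 1$ in~\eqref{eq:fpp:rec-n} yields
\[
\int \thp_\mu\bigl(\Mpm_n(X)\bigr)\,\cnd_\mu(\{\mu\})\,\pp_{n+1}'(\d\mu) \leq \pp_n'(\Mpf).
\]
Since $\thp$ is positive and $\cnd_\mu(\{\mu\})>0$ by assumption, the integrand on the left is strictly positive on $\Mpm_{n+1}$. Starting from the probability measure $\pp_0'$, induction shows that each $\pp_{n+1}'$ is concentrated on $\Mpm_{n+1}$ and integrates a strictly positive function to a finite value; exhausting $\Mpm_{n+1}$ by the superlevel sets of that density shows $\pp_{n+1}'$ is $\sigma$-finite. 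The countable sum $\pp'$ is therefore $\sigma$-finite, and since $\cnd$ is a probability kernel, so is $\pp$.

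The core step is to verify the thinning equation. For a non-negative measurable $g$, decompose by level on every factor:
\[
\iint g(\mu,\nu)\,\thp_\mu(\d\nu)\,\pp(\d\mu) = \sum_{j,m,n}\iiint g(\mu,\nu)\,1_{\Mpm_n}(\mu)\,1_{\Mpm_m}(\nu)\,\thp_\mu(\d\nu)\,\cnd_\kappa(\d\mu)\,\pp_j'(\d\kappa).
\]
Applying Lemma~\ref{thm:fpp:stepgen} termwise swaps the three-kernel composition on the right for $\thp_\mu(\d\kappa)\cnd_\nu(\d\mu)\pp_m'(\d\nu)$ and exchanges the roles of the level indicators, replacing $1_{\Mpm_m}(\nu)$ by $1_{\Mpm_j}(\kappa)$. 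Summing in $j$ kills the $\kappa$-indicator so that $\int\thp_\mu(\d\kappa)=1$ contributes trivially, summing in $n$ removes $1_{\Mpm_n}(\mu)$, and summing in $m$ reassembles $\pp'$, producing exactly $\iint g(\mu,\nu)\,\cnd_\nu(\d\mu)\,\pp'(\d\nu)$, which is the right-hand side of the claimed identity. The heavy lifting is already inside Lemma~\ref{thm:fpp:stepgen}; the main obstacle here is the triple-sum bookkeeping together with the Tonelli-type interchanges between the sums and the kernel integrations, both of which are legitimate because all quantities are non-negative and each $\pp_n'$ is $\sigma$-finite.
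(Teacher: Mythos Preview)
Your core argument is the paper's own: set $\pp'=\sum_n\pp_n'$, define $\pp$ by condensing $\pp'$, decompose by level, apply Lemma~\ref{thm:fpp:stepgen} termwise, and sum out the auxiliary indices. The paper writes the $j$-sum as $\sum_{j\le n}$, which is equivalent to your unrestricted sum since $\cnd_\kappa(\Mpm_n)=0$ whenever $\kappa\in\Mpm_j$ with $j>n$; it also does not discuss $\sigma$-finiteness at all.

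Your $\sigma$-finiteness sketch, which goes beyond the paper, has two small gaps. First, the inequality you display only bounds $\int f\,\d\pp_{n+1}'$ by $\pp_n'(\Mpf)$, and you never establish that the latter is finite, so the induction does not close. A clean repair is to invoke Lemma~\ref{thm:fpp:stepstep} with $m=0$ and $g\equiv 1$, which gives directly
\[
\int \thp_\mu(\{0\})\,\cnd_\mu(\{\mu\})\,\pp_n'(\d\mu)
\;=\;\int \thp_\mu(\{\mu\})\,1_{\Mpm_n}(\mu)\,\cnd_0(\d\mu)\;\le\;1
\]
for every $n\ge 1$, and positivity of $\thp$ together with $\cnd_\mu(\{\mu\})>0$ then yields $\sigma$-finiteness of each $\pp_n'$. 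Second, ``$\cnd$ is a probability kernel, so $\pp$ is $\sigma$-finite'' is not a valid inference in general (a probability kernel can collapse a $\sigma$-finite measure onto a single atom of infinite mass). Here the structure saves you: once the thinning identity is established, take $g(\mu,\nu)=1_{\{\nu=0\}}$ to obtain $\int\thp_\mu(\{0\})\,\pp(\d\mu)=\pp'(\{0\})=1$, and positivity of $\thp$ again gives $\sigma$-finiteness of $\pp$.
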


\begin{proof}
From Lemma~\ref{thm:fpp:stepgen} follows immediately 
\begin{multline*}
	\iint  g(\mu,\nu) 1_{\Mpm_k}(\nu) \thp_\mu(\d\nu) \pp_n(\d\mu)  \\
		\begin{aligned}
			&= \sum_{j\leq n} \iiint  g(\mu,\nu) 1_{\Mpm_k}(\nu) \thp_\mu(\d\nu) 1_{\Mpm_n}(\mu) \cnd_\kappa(\d\mu) \pp_j'(\d\kappa)  	\qquad \\
			&= \sum_{j\leq n} \iiint  g(\mu,\nu) 1_{\Mpm_j}(\kappa) \thp_\mu(\d\kappa) 1_{\Mpm_n}(\mu) \cnd_\nu(\d\mu) \pp_k'(\d\nu)  	\\
			&= \iint  g(\mu,\nu) 1_{\Mpm_n}(\mu) \cnd_\nu(\d\mu) \pp_k'(\d\nu).
		\end{aligned}
\end{multline*}
Summation over $n$ and $k$ finally yields
\begin{align*}
	\iint  g(\mu,\nu) \thp_\mu(\d\nu) \pp(\d\mu) = \iint  g(\mu,\nu) \cnd_\nu(\d\mu) \pp'(\d\nu).
\end{align*}
\end{proof}

For the finiteness of the measure $\pp$, the composed kernel $A$ of the thinning and the condensation kernel needs to be studied, since for every non-negative, measurable function $\phi$,
\begin{align*}
	\int \phi(\mu) \pp(\d\mu) = \iint \phi(\mu) \cnd_\nu(\d\mu)\pp'(\d\nu)
		= \iiint \phi(\mu) \cnd_\nu(\d\mu)\thp_\eta(\d\nu)\pp(\d\eta).
\end{align*}
Thus, a recurrence condition for $A_\eta\defeq \cnd_\nu(\scdot) \thp_\eta(\d\nu)$ needs to be checked.

\subsection{Examples}
\label{sect:fpp:ex}

First comments are devoted to the independent $q$-thinning. Note that the left hand side of Equation~\eqref{eq:fpp:thm-ibpf:ibpf} is essentially the Campbell measure of $\pp$.

\begin{corollary}
Let $\pp$ be a finite point process, $q\in(0,1)$ and $\thp$ be the independent $q$-thinning. Then for all non-negative, measurable $g$,
\begin{align*}
	\iint g(x,\mu) \mu(\d x) \pp(\d\mu)
		= \iint g(x,\mu+\delta_x) \pi(\mu,\d x) \pp(\d\mu),
\end{align*}
where the kernel $\pi$ is the kernel from $\Mpf(X)$ to $X$ induced by
\begin{align*}
	\frac{ 1_{|\eta|=1} }{ \pp^!_\mu(\zeta_X=0) } \pp^!_\mu(\d\eta).
\end{align*}
\end{corollary}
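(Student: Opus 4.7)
The plan is to verify this as a direct specialization of the integration-by-parts formula (Theorem on p. before Remark~\ref{rem:fpp:ibpf}) to the case of the independent $q$-thinning, for which the normalization constants $Z_\mu$ take the particularly simple form $Z_\mu=(1-q)^{|\mu|}$ (as noted just before Theorem~\ref{thm:fpp:split-palm}). The independent $q$-thinning is of type 1 with weight $t(x_1,\ldots,x_n)=(q/(1-q))^n$, and it clearly satisfies $\thp_\mu(\{\mu\})=q^{|\mu|}>0$ together with $\int\thp_\mu(\mu-\delta_x)\mu(\d x)=|\mu|q^{|\mu|-1}(1-q)>0$ whenever $|\mu|\geq 1$, so the hypotheses of the theorem are in force.

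The first step is to compute the weight quotient appearing on the left hand side of~\eqref{eq:fpp:thm-ibpf:ibpf}. Since $Z_\mu=(1-q)^{|\mu|}$, one finds
\begin{align*}
    \frac{Z_\mu}{Z_{\mu-\delta_x}}=1-q \qquad\text{and}\qquad \frac{Z_{\mu+\delta_x}}{Z_\mu}=1-q
\end{align*}
for every $x$ in the support of $\mu$ and every $\mu\in\Mpf(X)$ respectively. Both quotients are constant, which is the key simplification that turns the general formula into the classical Campbell-type identity.

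Substituting these into the integration-by-parts formula of the preceding theorem yields
\begin{align*}
    (1-q)\iint g(x,\mu)\mu(\d x)\pp(\d\mu) = \iint g(x,\mu+\delta_x)\frac{1-q}{\pp^!_\mu(\zeta_X=0)}\rho(\mu,\d x)\pp(\d\mu),
\end{align*}
where $\rho(\mu,\scdot)$ is the measure on $X$ induced by $1_{\eta(X)=1}\pp^!_\mu(\d\eta)$. Dividing both sides by $1-q$ gives the asserted equality with the Papangelou kernel
\begin{align*}
    \pi(\mu,\d x)=\frac{\rho(\mu,\d x)}{\pp^!_\mu(\zeta_X=0)},
\end{align*}
which is exactly the kernel induced by $1_{|\eta|=1}\pp^!_\mu(\d\eta)/\pp^!_\mu(\zeta_X=0)$, as claimed.

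There is essentially no obstacle here; the only point worth checking carefully is the identification $Z_\mu=(1-q)^{|\mu|}$ for the independent thinning, which follows from the representation $T_\mu(\phi)=Z_\mu\sum_{n\geq 0}\frac{1}{n!}\int\phi(\delta_\vx)t(\vx)\mu^{-[n]}(\d\vx)$ with $t(\vx)=(q/(1-q))^n$ and the binomial identity $\sum_{n=0}^{|\mu|}\binom{|\mu|}{n}q^n(1-q)^{|\mu|-n}=1$; since the paper already records $Z_\mu=(1-q)^{|\mu|}$ for the independent $q$-thinning, the corollary follows by the direct substitution outlined above.
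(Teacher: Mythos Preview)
Your proof is correct and follows the same approach as the paper: both specialize the general integration-by-parts formula~\eqref{eq:fpp:thm-ibpf:ibpf} to the independent $q$-thinning by observing that $Z_\mu=(1-q)^{|\mu|}$ makes the quotients $Z_\mu/Z_{\mu-\delta_x}$ and $Z_{\mu+\delta_x}/Z_\mu$ equal to the constant $1-q$, which then cancels from both sides. Your version is simply more explicit than the paper's terse two-line proof, spelling out the verification of the hypotheses and the cancellation on the right-hand side that the paper leaves to the reader.
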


\begin{proof}
From $Z_\mu=(1-q)^{|\mu|}$ follows
\begin{align*}
	\iint g(x,\mu) \frac{Z_\mu}{Z_{\mu-\delta_x}}\mu(\d x) \pp(\d\mu)
		= (1-q) \iint g(x,\mu) \mu(\d x) \pp(\d\mu),
\end{align*}
and therefore the factor $1-q$ may be removed.
\end{proof}

The primer examples corresponding to Poisson, binomial and negative binomial distribution given in Example~\ref{ex:gp_panjer} follow. Recall that
\begin{align*}
	\thp_\mu(\phi) = (1-q)^{\mu(X)} \sum_{n\geq 0} \frac{1}{n!} \left(\frac{q}{1-q}\right)^n \int \phi(\delta_{x_1}+\ldots+\delta_{x_n}) \mu^{-[n]}(\d x_1,\ldots,\d x_n),
\end{align*}
see~\cite[Prop. 6.3.5]{bN13a}.
\begin{example}
\begin{enumerate}
	\item Let $\Poi_\lambda$ be a Poisson process with finite intensity measure $\lambda$, then the independent $q$-thinning $\Poi_\lambda'=\Poi_{q\lambda}$ is a Poisson process with intensity measure $q\lambda$, and since the reduced Palm kernels of a Poisson process agree with the Poisson process itself, $\cnd_{\eta}=\Poi_{(1-q)\lambda}\ast\Delta_\eta$ is a Poisson process with intensity measure $(1-q)\lambda$ with the point configuration $\eta$ added, or equivalently $\Upsilon_\eta=\Poi_{(1-q)\lambda}$.
	\item For the P\'olya sum process, $\pp=\Poy_{z,\lambda}$, where $z\in(0,1)$ and $\lambda$ is a finite measure, the independent $q$-thinning is again a P\'olya sum process $P'=\Poy_{\frac{zq}{1-qz},\lambda}$. Its reduced Palm kernels are P\'olya sum processes as well, more precisely
		\begin{align*}
			\pp_\eta^! = \Poy_{z,\lambda+\eta} = \Poy_{z,\lambda} \ast \Poy_{z,\eta}.
		\end{align*}	
		The latter equality is an independent superposition property of the P\'olya sum process. Therefore,  $\cnd_{\eta}=\Poy_{(1-q)z,\lambda+\eta}\ast\Delta_\eta$ as well as $\Upsilon_\eta=\Poy_{(1-q)z,\lambda+\eta}$.
	\item Let $z>0$ and $\lambda$ be a finite point measure. For the P\'olya difference process $\pp=\PoyD_{z,\lambda}$, the $q$-thinning is $\pp'=\PoyD_{\frac{zq}{1+qz},\lambda}$. Moreover, $\cnd_{\eta}=\PoyD_{(1-q)z,\lambda-\eta}\ast\Delta_\eta$, i.e. $\Upsilon_\eta=\PoyD_{(1-q)z,\lambda-\eta}$.
	\item Let $\pp$ be a log-Gauss Cox process $\lgcp(\mu,c)$, i.e. a Cox process whose random intensity measure is given by the exponential of a Gaussian process with mean function $\mu$ and covariance function $c$. If $\exp\left[\int \mu_x+\nicefrac{c_{x,x}}{2}\d x\right]<\infty$, then $\pp$ is a finite point process. Its independent $q$-thinning $\pp'$ is again a log-Gauss Cox process with parameters $\mu+\ln q$ and $c$. The Palm kernels were given in~\cite{CMW15a}, they are log-Gauss Cox processes again, $\pp_\eta^!$ is $\lgcp\left(\mu+\int c_{x,\scdot}\eta(\d x),c\right)$, and therefore
		\begin{align*}
			\Upsilon_\eta(\phi) = \frac{1}{\pp_\eta^!\bigl( (1-q)^{\zeta_X} \bigr) } \int \phi(\nu) (1-q)^{|\nu|}\pp_\eta^!(\d\nu).
		\end{align*}
		Further evaluation yields that the splitting kernel is Cox itself, but its directing measure is only absolutely continuous with respect to a log-Gaussian measure with its density penalizing realizations with a large mass.
\end{enumerate}
\end{example}

An inhomogeneous generalization, which employs a measurable function $q:X\to(0,1)$ instead of a constant, is straight forward. Here,
\begin{align*}
	\thp_\mu(\phi) = \e^{ \int \ln\bigl(1-q(x)\bigr) \mu(\d x) } \sum_{n\geq 0} \frac{1}{n!} \int \phi(\delta_{\vx}) \prod_{j=1}^n\left(\frac{q(x_j)}{1-q(x_j)}\right) \mu^{-[n]}(\d\vx).
\end{align*}
Note that the integration-by-parts formula is
\begin{align*}
	\iint g(x,\mu) (1-q_x)\mu(\d x) \pp(\d\mu)
		= \iint g(x,\mu+\delta_x) \pi(\mu,\d x) \pp(\d\mu)
\end{align*}
with $\pi$ induced by
\begin{align*}
	\bigl( 1-\eta(q) \bigr) \frac{ 1_{|\eta|=1} }{ \pp^!_\mu(\zeta_X=0) } \pp^!_\mu(\d\eta).
\end{align*}
The given examples carry over directly.

Thinnings of type 2 are in spirit close to mixed sample processes. Recall that these thinnings,
\begin{align*}
	\thp_\mu(\phi) = Z_\mu \sum_{m\geq 0} \frac{t_{|\mu|,m}}{m!} \int \phi(\delta_\vy) \mu^{-[m]}(\d\vy),
\end{align*}
have a doubly stochastic nature: Firstly, some number of points independent of their positions is chosen to survive, and secondly a subset of these points uniformly among all subsets of that given size is chosen. Then
\begin{align*}
	Z_\mu^{-1} = \sum_{j=0}^{|\mu|} \binom{|\mu|}{j} t_{|\mu|,j},\qquad
	\frac{Z_\mu}{Z_{\mu-\delta_x}} = \frac{ \sum_{j=0}^{|\mu|-1} \binom{|\mu|-1}{j} t_{|\mu|-1,j} }{ \sum_{j=0}^{|\mu|} \binom{|\mu|}{j} t_{|\mu|,j} }.
\end{align*}
Since $Z_\mu$ depends on $\mu$ only via its total mass, write also $Z_m$ instead of $Z_\mu$ for $\mu\in\Mpm_m$. 

The Palm kernel of mixed sample processes are mixed sample themselves, and thus the Papangelou kernel is almost as simple as the one for the Poisson process.

\begin{corollary}
Let $\thp$ be a thinning of type 2 and $\pp$ be a mixed sample process such that
\begin{align*}
	\pp(\phi) = \frac{1}{\Xi} \sum_{n\geq 0} \frac{p_n}{n!} \int \phi(\delta_\vx) \lambda^n(\d\vx)
\end{align*}
for a sequence $(p_n)_{n\in\N_0}$ of non-negative numbers such that if $p_{n_0}=0$, then $p_n=0$ for all $n\geq n_0$, and $\lambda$ is a probability measure on $X$. Then $\pp$ satisfies an integration-by-parts formula
\begin{align*}
	\iint g(x,\mu) \delta(\mu,x)\mu(\d x) \pp(\d\mu) 
		= \iint g(x,\mu+\delta_x) \pi(\mu,\d x) \pp(\d\mu)
\end{align*}
for non-negative, measurable functions $g$, where for $\pp$-a.e. $\mu\in\Mpm_m(X)$,
\begin{align*}
	\pi(\mu,\d x) = \frac{Z_{m+1}}{Z_m} \frac{t_{m+1,m}}{t_{m,m}} \frac{p_{m+1}}{p_m} \lambda(\d x), \qquad
	\delta(\mu,x) = \frac{Z_m}{Z_{m-1}}\cdot \frac{t_{m,m-1}}{t_{m-1,m-1}}.
\end{align*}
\end{corollary}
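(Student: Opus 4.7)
The plan is to invoke the integration-by-parts formula~\eqref{eq:fpp:rem-ibpf:ibpf} of Remark~\ref{rem:fpp:ibpf} for type~2 thinnings and then to evaluate the Palm ingredients explicitly for the given mixed sample process. The left-hand weight in~\eqref{eq:fpp:rem-ibpf:ibpf} is
\[
\frac{Z_\mu\, t_{|\mu|,|\mu|-1}}{Z_{\mu-\delta_x}\,t_{|\mu|-1,|\mu|-1}};
\]
because $Z_\mu$ and the coefficients $t_{\cdot,\cdot}$ depend on $\mu$ only through its total mass, this collapses on $\Mpm_m(X)$ to $\delta(\mu,x) = \frac{Z_m t_{m,m-1}}{Z_{m-1}t_{m-1,m-1}}$, as claimed. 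The positivity of $t_{m,m}$ and $t_{m,m-1}$ implicit in the setup ensures that both $\thp_\mu(\{\mu\})>0$ and $\int\thp_\mu(\mu-\delta_x)\mu(\d x)>0$ hold, and the hypothesis on $(p_n)$ translates to the monotone-support condition $\pp(\Mp_{n_0})=0\Rightarrow \pp(\Mp_n)=0$ for $n\geq n_0$, so the remark applies.

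The core task is thus to evaluate
\[
\pi(\mu,\d x) = \frac{Z_{\mu+\delta_x}\,t_{|\mu|+1,|\mu|}}{Z_\mu\, t_{|\mu|,|\mu|}\, \pp^!_\mu(\zeta_X=0)}\,\rho(\mu,\d x)
\]
for the process $\pp$. Expanding the reduced Campbell measure $C^{!,m}$ against the representation of $\pp$, using the identity that $\mu^{-[m]}$ contributes $n!/(n-m)!$ ordered sub-tuples when $\mu=\delta_{y_1}+\ldots+\delta_{y_n}$, invoking the symmetry of $\lambda^n$, and reindexing $n=m+k$ yields
\[
\nu^{[m]}_\pp(\d\vx) = \frac{\Xi_m}{\Xi}\,\lambda^m(\d\vx),\qquad \pp^!_\mu(\phi) = \frac{1}{\Xi_m}\sum_{k\geq 0}\frac{p_{m+k}}{k!}\int \phi(\delta_\vz)\,\lambda^k(\d\vz),
\]
with $\Xi_m \defeq \sum_{k\geq 0} p_{m+k}/k!$, for $\mu\in\Mpm_m(X)$. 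In particular, the reduced Palm kernel of a mixed sample process is again a mixed sample process, with weight sequence shifted by $m$ and renormalized by $\Xi_m$; it depends on $\mu$ only through $|\mu|$, as one would expect.

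Reading off the $k=0$ contribution gives $\pp^!_\mu(\zeta_X=0)=p_m/\Xi_m$, while the measure $\rho(\mu,\cdot)$ on $X$ induced by $1_{\eta(X)=1}\pp^!_\mu(\d\eta)$ arises solely from the $k=1$ term, so $\rho(\mu,\d x)=(p_{m+1}/\Xi_m)\,\lambda(\d x)$. Substituting into the expression for $\pi$, the factor $\Xi_m$ cancels between numerator and denominator, leaving
\[
\pi(\mu,\d x) = \frac{Z_{m+1}}{Z_m}\cdot\frac{t_{m+1,m}}{t_{m,m}}\cdot\frac{p_{m+1}}{p_m}\,\lambda(\d x),
\]
as claimed. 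The only real obstacle here is the bookkeeping in identifying the Palm kernel; the monotonicity hypothesis on $(p_n)$ guarantees $p_m>0$ on the support of $\pp$, so the division by $p_m$ is legitimate there.
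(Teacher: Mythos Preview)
Your proof is correct and follows essentially the same route as the paper: compute the reduced Palm kernel of the mixed sample process via the factorial moment measures, then plug into the type~2 integration-by-parts formula of Remark~\ref{rem:fpp:ibpf}. If anything, you are slightly more explicit than the paper in spelling out $\pp^!_\mu(\zeta_X=0)=p_m/\Xi_m$, $\rho(\mu,\d x)=(p_{m+1}/\Xi_m)\lambda(\d x)$, and the resulting cancellation of $\Xi_m$.
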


\begin{proof}
Note that the $m$-th factorial moment measure $\nu_\pp^m$ of $\pp$ is given by
\begin{align*}
	\nu_\pp^m (f) = \frac{1}{\Xi} \sum_{k\geq 0} \frac{p_{m+k}}{k!} \int f(\vy)\lambda^m(\d\vy) \eqdef \iota_m \lambda^m(f).
\end{align*}
Thus, the disintegration of the $m$-th reduced Campbell measure of $\pp$ with respect to $\nu_\pp^m$ yields
\begin{align*}
	C^{!,m}_\pp(g) &= \frac{1}{\Xi} \sum_{k\geq 0} \frac{p_{m+k}}{k!} \iint g(\vy,\delta_\vx) \lambda^k(\d\vx)\lambda^m(\d\vy),
\end{align*}
and the reduced Palm kernel is for the point configuration $\delta_\vy$ consisting of $m$ points
\begin{align*}
	\pp^!_{\vy}(\phi) = \frac{1}{\Xi_m} \sum_{k\geq 0} \frac{p_{m+k}}{k!} \int \phi(\delta_\vx) \lambda^k(\d\vx)
\end{align*}
and $\Xi_m$ is the corresponding normalization constant. Thus, plugging this into Remark~\ref{rem:fpp:ibpf} yields the Papangelou kernel $\pi$.
\end{proof}

\begin{example}
Let $\pp$ be a mixed sample process with weight sequence $(p_n)_{n\in\N_0}$ and a probability measure $\lambda$ on $X$, and $\thp$ be a thinning of type 2. A short calculation shows that $\pp'$ is mixed sample again, with weights
\begin{align*}
	p_m'= \frac{1}{\Xi} \sum_{k\geq 0} \frac{p_{m+k}Z_{m+k}t_{m+k,m}}{k!},
\end{align*}
i.e.
\begin{align*}
	\thp\pp(\phi) &= \frac{1}{\Xi} \sum_{m\geq 0} \frac{1}{m!} \sum_{k\geq 0} \frac{p_{m+k}Z_{m+k}t_{m+k,m}}{k!} \int \phi(\delta_{\vy}) \lambda^m(\d\vy).
\end{align*}
For simplicity, assume subsequently that $t_{n,j} = \binom{n}{j}^{-1}$, such that $Z_\mu=\nicefrac{1}{|\mu|+1}$ and $\nicefrac{Z_\mu}{Z_{\mu-\delta_x}} = \nicefrac{ |\mu| }{ |\mu|+1 }$. Then the integration-by-parts formula is
\begin{align*}
	\iint g(x,\mu) \frac{1}{|\mu|+1} \mu(\d x) \pp(\d\mu)
		= \iint g(x,\mu+\delta_x) \frac{p_{|\mu|+1}}{\bigl(|\mu|+2\bigr)p_{|\mu|}} \lambda(\d x) \pp(\d\mu).
\end{align*}
Comparing to Example~\ref{ex:gp_power}, the uniform thinning together with power laws, if $p_k=k!\cdot (k+1)^{-\alpha}$, the kernel on the right hand side is $\pi(\mu,\d x) = \left(\nicefrac{m+1}{m+2}\right)^{\alpha+1}\lambda(\d x)$ whenever $\mu\in\Mpm_m$. If the function $(x,\mu)\mapsto g(x,\mu)$ depends on the total mass of $\mu$ only, the integration-by-parts formula in Example~\ref{ex:gp_power} is recovered.
\end{example}

Finally, let $V:X\times\Mpf(X)\to\R_+$ be measurable and (abusing notation) denote also by $V$ the function
\begin{align}	\label{eq:fpp:ex:interaction}
	V(\vx) = V(x_1;0) + V(x_2;\delta_{x_1}) + \ldots + V(x_n;\delta_{x_1} + \ldots + \delta_{x_{n-1}})).
\end{align}
Assuming that the following line defines a probability measure,
\begin{align}	\label{eq:fpp:ex:interact-pp}
	\pp(\phi) = \frac{1}{\Xi} \sum_{n\geq 0} \frac{1}{n!} \int \phi(\delta_\vx) \e^{-V(\vx)} \lambda^n(\d\vx),
\end{align}
$\pp$ is a point process with interaction. Note that
\begin{align*}
	\pp^!_\nu(\phi) = \frac{1}{\Xi_\nu} \sum_{n\geq 0} \frac{1}{n!} \int \phi(\delta_\vx) \e^{-V_\nu(\vx)}\lambda^n(\d\vx),
\end{align*}
where $V_\nu$ is given similar to $V$ in~\eqref{eq:fpp:ex:interaction} with initialized with the point configuration $\nu$ instead of the empty configuration and $\Xi_\nu$ is the corresponding normalization constant.

\begin{example}
Let $\thp$ be a thinning of type 1 with weight function $t(\vy)=\e^{-\tilde V(\vy)}$ for a function $\tilde V$ similar to $V$ given in~\eqref{eq:fpp:ex:interaction}, and $\pp$ be a point process given in~\eqref{eq:fpp:ex:interact-pp}. Then
\begin{align*}
	\thp\pp(\phi) = \frac{1}{\Xi} \sum_{m\geq 0} \frac{1}{m!} \int \phi(\delta_\vy) r(\vy) \e^{-\tilde V(\vy)-V(\vy)} \lambda^m(\d\vy),
\end{align*}
where $r(\vy) = \Xi_{\delta_\vy}\pp^!_{\delta_\vy}(Z_{\delta_{\vy+\scdot}})$. Moreover, since $\pp^!_\nu(\zeta_X=0)=\nicefrac{1}{\Xi_\nu}$ for $\pp$-a.e. $\nu\in\Mpf$, the splitting kernel is
\begin{align*}
	\Upsilon_\nu(\phi) = \frac{1}{\pp^!_\nu(\zeta_X=0)} \int \phi(\eta) Z_{\nu+\eta} \pp^!_\nu(\d\eta)
		= \frac{1}{\gamma_\nu}\sum_{n\geq 0} \frac{1}{n!} \int \phi(\delta_\vy) \e^{-V_\nu(\vy)} Z_{\delta_\vy+\nu} \lambda^n(\d\vy),
\end{align*}
thus $\pi$ turns out to be
\begin{align*}
	\pi(\mu,\d x) = \frac{Z_{\mu+\delta_x}}{Z_\mu} \e^{-V_\mu(x)} \lambda(\d x).
\end{align*}
\end{example}

Note that in general not all pairs of point processes and thinnings admit an integration-by-parts formula, e.g. if $\pp$ is a finite Poisson process and $\thp$ is a thinning which deletes all points whose next neighbour is at some distance smaller than a fixed $\delta>0$. In this case, removing a single point from a realization gets weight 0 with a positive probability.



\begin{thebibliography}{99}

\bibitem{rA91}
R. Ambartzumian, On condensable point processes, in: New trends in probability and statististics , edited by V.V. Sazonov amd T.L. Shervashidze (VSP, Mokslas, Utrecht, Vilnius, 1991), pp. 655--667.


\bibitem{CMW15a} J.-F. Coeurjolly, J. M\o ller and R. Waageptersen: \emph{Palm distributions for log Gaussian Cox processes}. arXiv {\bfseries 1506.04576v3}, (22.12.2015). 

\bibitem{DVJ03}
D.J. Daley and D. Vere-Jones, An Introduction to the Theory of Point Processes, Vol. I, 2nd ed. (Springer) (2003).

\bibitem{kF75}
K.-H. Fichtner, Charakterisierung Poissonscher zuf\"alliger Punktfolgen und infinitesimale Verd\"unnungsschemata, Math. Nachr. {\bfseries 193}, 93--104 (1975).

\bibitem{FF00}
F. Freudenberg and K.-H. Fichtner, Characterization of Classical and Quantum Poisson systems by Thinnings and Splittings, Math. Nachr. {\bfseries 218}, 25--47 (2000).

\bibitem{oK83}
O. Kallenberg, Random measures (Akademie-Verlag, Berlin, 1983).

\bibitem{oK84}
O. Kallenberg, An Informal Guide to the Theory of Conditioning in Point Processes. International Statistical Review / Revue Internationale De Statistique {\bfseries 52}(2), 151--164 (1984).

\bibitem{aK86}
A. Karr, Point processes and teir statistical inference (Dekker, New York, 1986).

\bibitem{kK14}
K. Krickeberg, Point processes, Classical lectures (Walter Warmuth Verlag, N\"achst Neuendorf, 2014).

%
%
%

\bibitem{bN13a}
B. Nehring, The method of cluster expansions, Construction of classical and quantum gases, Mathematical lessons (Walter Warmuth Verlag, N\"achst Neuendorf, 2013).


\bibitem{N}
B. Nehring, A characterization of Poisson processes revisited, Electr. J. Prob. {\bfseries 9}, 1--5 (2014).

\bibitem{NRZ15}
B. Nehring, M. Rafler and H. Zessin, Splitting characterization of the Papangelou process, Math. Nachr. {\bfseries 46}, 49--66 (2015).



\bibitem{NZ79}
X.X. Nguyen and H. Zessin, Integral and differential Characterizations of the Gibbs process, Math. Nachr. {\bfseries 88}, 105--115 (1979).

\bibitem{hP81} H. Panjer, Recursive Evaluation of a Family of Compound Distributions. ASTIN Bulletin, {\bfseries 12}(1), 22--26 (1081). doi:10.1017/S0515036100006796

%
%

\bibitem{SJ81} 
B. Sundt and W. Jewell, Further Results on Recursive Evaluation of Compound Distributions. ASTIN Bulletin, {\bfseries 12}(1), 27--39 (1981). doi:10.1017/S0515036100006802

\bibitem{hZ09}
H. Zessin, Der Papangelou Proze{\ss}, J. Contemp. Math. Anal. {\bfseries 44}, 61--72 (2009).

\end{thebibliography}
\end{document}